\newcommand{\version}{Ver.~0.0}
\newcommand{\setversion}[1]{\renewcommand{\version}{Ver.~{#1}}}
\title [Enhanced adjoint orbits]
{Enhanced adjoint action and their orbits for the general linear group}
\dedicatory{}
\author{Kyo Nishiyama}
\address{
Department of Physics and Mathematics\\
Aoyama Gakuin University\\
Fuchinobe 5-10-1, Sagamihara 252-5258, Japan}
\email{kyo@gem.aoyama.ac.jp}
\thanks{K. N.  is supported by JSPS Grant-in-Aid for Scientific Research (C) \#{16K05070}.}
\author{Takuya Ohta}
\address{
  Depertment of Mathematics, Tokyo Denki University,
  Senju-asahi-cho 5, Adachi-ku,Tokyo 120-8551, Japan}
\email{ohta@cck.dendai.ac.jp}
\thanks{}
\date{\version\quad(compiled on \today)}
\subjclass[2010]{Primary 14L30; Secondary 15A72, 14D25}
\keywords{enhanced nilpotent cone, exotic nilpotent cone, adjoint quotient, classical invariant theory}
\newlength{\tempwidth}
\theoremstyle{plain}
\newtheorem{theorem}{Theorem}
\newtheorem{corollary}[theorem]{Corollary}
\newtheorem{lemma}[theorem]{Lemma}
\theoremstyle{definition}
\newtheorem{problem}[theorem]{\upshape Problem}
\theoremstyle{remark}
\newtheorem{remark}[theorem]{\upshape Remark}
\numberwithin{equation}{section}
\numberwithin{theorem}{section}
\newcommand{\Z}{\mathbb{Z}}
\newcommand{\R}{\mathbb{R}}
\newcommand{\C}{\mathbb{C}}
\newcommand{\lie}[1]{\mathfrak{#1}}
\newcounter{thmenum}
\newenvironment{thmenumerate}{%
\begin{list}{$(\thethmenum)$}{%
\usecounter{thmenum}
\setlength{\labelsep}{.5em}
\setlength{\labelwidth}{-7pt}
\setlength{\topsep}{0pt}
\setlength{\partopsep}{0pt}
\setlength{\parsep}{0pt}
\setlength{\leftmargin}{3pt}
\setlength{\rightmargin}{0pt}
\setlength{\itemindent}{\leftmargin}
\setlength{\itemsep}{0pt}
}}
{\end{list}}
\newcommand{\mycomment}[1]{} 
\newcommand{\vectwo}[2]{{\renewcommand{\arraystretch}{.85}\Bigl(\begin{array}{@{\,}c@{\,}}{#1}\\ {#2}\end{array}\Bigr)}}
\newcommand{\eb}{\boldsymbol{e}}
\newlength{\lengthcup}
\newcommand{\diag}{\qopname\relax o{diag}}
\newcommand{\trace}{\qopname\relax o{trace}}
\newcommand{\Stab}{\qopname\relax o{Stab}}
\newcommand{\Ad}{\qopname\relax o{Ad}}
\newcommand{\codim}{\qopname\relax o{codim}}
\renewcommand{\Im}{\qopname\relax o{Im}}
\newcommand{\Image}{\qopname\relax o{Im}}
\newcommand{\closure}[1]{\overline{#1}}
\newcommand{\transpose}[1]{\,{}^t{#1}\,}
\newcommand{\Spec}{\mathop\mathrm{Spec}\nolimits{}}
\newcommand{\Orbit}{\mathbb{O}}
\newcommand{\GITquotient}{/\!/}
\newcommand{\git}{\GITquotient}
\newcommand{\GL}{\mathrm{GL}}
\newcommand{\SL}{\mathrm{SL}}
\newcommand{\Mat}{\mathrm{M}}
\newcommand{\Det}{\qopname\relax o{Det}}
\newcommand{\skipover}[1]{}
\newcommand{\nilpotents}{\mathfrak{N}}
\newcommand{\nullcone}{\mathcal{N}}
\newcommand{\Cbatsu}{\C^{\times}}
\newcommand{\maxX}{\mathcal{X}}
\begin{document}

\begin{abstract} 
  We studied an enhanced adjoint action of the general linear group on
  a product of its Lie algebra and a vector space
  consisting of several copies of defining representations and its duals.
  We determined regular semisimple orbits (i.e., closed orbits of maximal dimension)
  and the structure of enhanced null cone, including its irreducible components and their dimensions.
\end{abstract}

\maketitle



\section*{Introduction}

Let $ G $ be a reductive algebraic group over the complex number field $ \C $, 
and $ \lie{g} $ its Lie algebra.  
The adjoint action of $ G $ on $ \lie{g} $ is a basic tool for many aspects of representation theory, and also useful for 
invariant theory, theory of singularities, and so on.  

In \cite{Achar.Henderson.2008}, \cite{Achar.Henderson.2011}, \cite{Johnson.2010}, Achar-Henderson and Johnson 
considered an enhanced version of 
nilpotent varieties and classified the nilpotent orbits (there are only finitely many of them).  
Also, in \cite{Kato.2009}, Kato considered an ``exotic''  nilpotent cone and give the 
Deligne-Langlands theory for those exotic nilpotent orbits.  
There are many related works based on algebraic geometry, combinatorial theory, and theory of character sheaves 
(\cite{Travkin.2009}, \cite{FGT.2009}, \cite{Henderson.Trapa.2012}, \cite{Fresse.Nishiyama.2016}, 
\cite{Rosso.2012}).

In these papers, enhancement of the nilpotent cone is only ``one-sided'' to get a criterion of 
finiteness of orbits.  
However, from the view point of symmetric spaces, 
it seems better to enhance all the adjoint orbits in two-sided directions.  
In this respect, we have already had two results (\cite{Ohta.2008}, \cite{Nishiyama.2014}), 
which relates the orbit structure of two enhanced actions.  But, so far, we have not known 
the explicit orbit structures of individual enhanced adjoint actions.  

In this paper, 
we begin to study (two-sided) ``enhanced adjoint action'' of $ G $ for $ G = \GL_n(\C) $ (type A).  
The big difference from those one-sided enhanced (or exotic) ones is that 
there appear infinitely many nilpotent orbits.  
So the analysis becomes more difficult, but involving less combinatorics.  
In easiest cases, we can describe enhanced adjoint orbits fairly explicitly, 
but in general, we have obtained coarser structures, like
regular orbits of maximal possible dimensions, 
structure of invariants,
irreducible components of nilpotent variety.

To state the main results more explicitly, let us introduce some notations.  
Let $ V = \C^n $ be a vector space of dimension $ n $.  
We consider a natural action of 
$ G = \GL(V) = \GL_n(\C) $ 
on 
\begin{equation*}
W = (\C^n)^{\oplus p} \oplus (\C^{\ast n})^{\oplus q} \oplus \Mat_n 
= \Mat_{n,p} \oplus \Mat_{q, n} \oplus \Mat_n ,
\end{equation*}
with the action of $ g \in G $ given by 
\begin{equation*} 
g \cdot (B, C, A) 
= (g B, C g^{-1}, \Ad(g) A) 
\qquad
\text{ for } \;\;
(B, C, A) \in \Mat_{n,p} \oplus \Mat_{q, n} \oplus \Mat_n.  
\end{equation*}
Thus, the part $ \Mat_n $ is considered to be $ \lie{g} = \lie{gl}_n(\C) $ and
the action is the adjoint action.
For the other parts, $ \Mat_{n,p} $ is a $ p $-copy of natural representations
and $ \Mat_{q, n} $ is a $ q $-copy of its dual.
So the space $ W $ is the full enhanced adjoint representation we explained.

There are obvious invariants for the action.  
We put 
\begin{align*}
\tau_k &:= \trace A^k  & & (1 \leq k \leq n - 1), \\
\gamma_{i, j}^{\ell} &:= (C A^{\ell} B)_{i, j} & & 
(0 \leq \ell \leq n - 1, \, 1 \leq i \leq q, \, 1 \leq j \leq p).
\end{align*}
These invariants are generators of the whole invariant ring $ \C[W]^G $, 
which seems to be known for experts in various forms including quiver theory 
(see Theorem~\ref{thm:generators-of-invariants}).
Thus, we can define a quotient map
$ \pi_W : W \to \C^n \times (\Mat_{q, p})^n $ using these invariants
(see Eq.~\eqref{eq:quotient-map-Phi}).

If $ p = 1 $ or $ q = 1 $, the quotient map has a very good property.
Namely, we get 

\begin{theorem}[Theorem~\ref{theorem:general-fiber-EAO} (2)]
  If $ p = 1 $ or $ q = 1 $, the map $ \pi_W : W \to \C^n \times (\Mat_{q, p})^n $
is an affine categorical quotient map (note that $ \Mat_{q, p} = \C^p $ or $ \C^q $).  
In particular, the quotient map $ \pi_W $ is coregular, and 
$ \C[W]^G $ is a polynomial ring generated by the fundamental invariants listed above.
\end{theorem}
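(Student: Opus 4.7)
\medskip
\textbf{Reduction and overall strategy.}
The plan is to reduce to $p=1$ via the $G$-equivariant involution $(B,C,A)\mapsto(\transpose{C},\transpose{B},\transpose{A})$ twisted by the automorphism $g\mapsto\transpose{g^{-1}}$ of $G$; this exchanges the cases $(p,q)\leftrightarrow(q,p)$ and intertwines the two corresponding families of fundamental invariants. Assuming $p=1$, the target $Y:=\C^n\times(\C^q)^n$ has dimension $n+nq$, equal to the number of fundamental invariants $\tau_k,\gamma^{\ell}_{i,1}$ listed before Theorem~\ref{thm:generators-of-invariants}. That theorem already gives surjectivity of $\pi_W^{\ast}\colon\C[Y]\to\C[W]^G$; since $\C[Y]$ is a domain, injectivity of $\pi_W^{\ast}$ is equivalent to $\pi_W$ being dominant. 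Thus the entire statement reduces to showing that $\pi_W$ is dominant: once $\pi_W^{\ast}\colon\C[Y]\stackrel{\sim}{\to}\C[W]^G$ is an isomorphism, $\pi_W$ is by definition the affine categorical quotient, $\C[W]^G=\C[Y]$ is a polynomial ring, and coregularity follows automatically.

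\medskip
\textbf{Dominance via a cyclic vector.}
To prove dominance I construct a preimage of a point in a dense open subset of $Y$. Given $(a_1,\ldots,a_n,v_0,\ldots,v_{n-1})\in Y$, Newton's identities convert the first coordinates into the elementary symmetric functions of a multiset $\{\lambda_1,\ldots,\lambda_n\}$, the root set of some monic polynomial $\chi(t)\in\C[t]$ of degree $n$. Restrict to the dense open locus where $\chi$ has $n$ distinct roots, and set $A:=\diag(\lambda_1,\ldots,\lambda_n)$ and $B:=\transpose{(1,\ldots,1)}\in\C^n$. The Vandermonde matrix $(\lambda_j^{\ell})_{0\le\ell\le n-1,\,1\le j\le n}$ is invertible, so $B,AB,\ldots,A^{n-1}B$ is a basis of $\C^n$; consequently the system $CA^{\ell}B=v_{\ell}$ for $0\le\ell\le n-1$ is a nondegenerate linear system in the columns of $C\in\Mat_{q,n}$ with a unique solution. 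The resulting triple $(B,C,A)$ satisfies $\pi_W(B,C,A)=(a_1,\ldots,a_n,v_0,\ldots,v_{n-1})$, and dominance is established.

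\medskip
\textbf{Main obstacle.}
The proof itself is short; the only computational step is the cyclic-vector construction, which is routine. The genuine obstacle is conceptual: dominance of $\pi_W$ alone would only yield algebraic independence of the listed invariants, not the polynomiality of the whole ring $\C[W]^G$. The key ingredient bridging these two statements is the prior fact that the $\tau_k$ and $\gamma^{\ell}_{i,j}$ \emph{generate} $\C[W]^G$, i.e., Theorem~\ref{thm:generators-of-invariants}. The failure of the analogous conclusion for general $p,q\ge 2$ should be traceable, in this framework, to the fact that the generic isotropy for the $G$-action on $W$ ceases to be trivial, so the target dimension no longer matches the number of fundamental invariants and the dimension-counting argument above breaks down.
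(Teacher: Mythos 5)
Your proof is correct, and it reaches the conclusion by a route that is organized differently from the paper's. The paper derives part (2) as a corollary of its dimension computation in part (1): it parametrizes generic fibers by $\C^n\times(\Det_1(\Mat_{q,p}))^n$ via the map $\Psi$, obtains $\dim\Im\pi_W=n(p+q)$, and then observes that when $\min\{p,q\}=1$ the (closed, irreducible) image has the same dimension as the target affine space and therefore fills it; surjectivity plus the generation theorem then give polynomiality. You instead prove dominance directly by exhibiting a preimage of a generic target point --- regular semisimple $A$, cyclic vector $B=\transpose{(1,\dots,1)}$, and $C$ solved for uniquely from the invertible Vandermonde system --- and then close the argument purely ring-theoretically: surjectivity of $\pi_W^{\ast}$ from Theorem~\ref{thm:generators-of-invariants} plus injectivity from dominance yields $\C[Y]\simeq\C[W]^G$. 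The computational kernel (diagonal $A$ with distinct eigenvalues, invertibility of the Vandermonde matrix) is the same in both arguments, but your version is more self-contained for part (2) alone, since it bypasses the determinantal variety, the map $\Psi$, and the appeal to closedness of the image of the quotient map. The paper's route buys the dimension formula for $\Im\pi_W$ for all $p,q$, which you would still need part (1) for. Your reduction of $q=1$ to $p=1$ by the twisted transpose involution is also fine and is implicit in the paper's symmetric treatment.

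One inaccuracy in your closing aside: the failure of coregularity for $p,q\ge 2$ is \emph{not} caused by the generic isotropy becoming nontrivial. By Corollary~\ref{corollary:reg-ss-orbits} the generic stabilizer is trivial for all $p,q\ge 1$ and the generic orbit always has dimension $n^2$. What fails is that the number of fundamental invariants, $n+npq$, exceeds $\dim(W\git G)=n(p+q)$ as soon as $(p-1)(q-1)\ge 1$, so the generators necessarily satisfy relations. This does not affect the validity of your proof of the stated theorem.
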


For general $ p \geq 1 $ and $ q \geq 1 $,
the following theorem gives a generic structure of enhanced adjoint orbits.

\begin{theorem}[Theorem~\ref{theorem:general-fiber-EAO} and Corollary~\ref{corollary:reg-ss-orbits}] 
The dimension of the image 
$ \dim \Im \pi_W $ is equal to $ n (p + q) $, and 
a general fiber of $ \pi_W $ is a single $ G $-orbit of dimension $ n^2 $.  
This implies that general orbits for the enhanced adjoint action are closed of dimension $ n^2 $.  
\end{theorem}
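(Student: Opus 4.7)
The plan is to exhibit a Zariski-open subset $U\subseteq W$ on which every fiber of $\pi_W$ consists of a single $G$-orbit of dimension $n^2$. Granting this, the generic fiber has dimension $n^2$, so $\dim \Im \pi_W = \dim W - n^2 = n(p+q)$ by the fiber-dimension theorem; moreover, any fiber of a morphism to the affine variety $\C^n \times (\Mat_{q,p})^n$ is closed in $W$, so the generic orbit, coinciding with a fiber, is closed of dimension $n^2$.

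I would take $U$ to be the locus where $A$ has $n$ distinct eigenvalues, every row of $B$ is nonzero, and every column of $C$ is nonzero; non-emptiness is clear. Let $(B,C,A)\in U$ and conjugate so that $A=\diag(a_1,\ldots,a_n)$ with the $a_k$ distinct. The defining computation is
\[
\gamma_{i,j}^{\ell} \;=\; (CA^\ell B)_{i,j} \;=\; \sum_{k=1}^{n} a_k^\ell\, C_{ik}B_{kj}, \qquad 0\le \ell \le n-1.
\]
Setting $M^{(k)}_{ij}:=C_{ik}B_{kj}$, the linear map $(M^{(1)},\ldots,M^{(n)})\mapsto(\gamma^{0},\ldots,\gamma^{n-1})$ is governed by the Vandermonde matrix in the $a_k$ and is therefore invertible. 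Each rank-one matrix $M^{(k)}=C_{\cdot k}\,B_{k\cdot}$ is thus uniquely recovered from the invariants; and since every row of $B$ and every column of $C$ is nonzero on $U$, each $M^{(k)}$ is nonzero and determines the pair $(C_{\cdot k},B_{k\cdot})$ up to the rescaling $(C_{\cdot k},B_{k\cdot})\mapsto(\lambda_k C_{\cdot k},\lambda_k^{-1}B_{k\cdot})$. Hence $(B,C)$ is determined from the invariants, given the diagonal form of $A$, up to the action of the diagonal torus $T=\Stab_G(A)$.

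To pass from this diagonal-slice statement to the full fiber, take any $(B',C',A')\in\pi_W^{-1}(\pi_W(B,C,A))$. The trace invariants force $A'$ to have the same (distinct) eigenvalues as $A$, so some $g\in G$ satisfies $g^{-1}A'g=A$. The point $g^{-1}\cdot(B',C',A')=(g^{-1}B',\,C'g,\,A)$ lies in the same $G$-orbit as $(B',C',A')$ and carries the same invariants, so by the previous step it lies in the $T$-orbit of $(B,C,A)$; hence $(B',C',A')\in G\cdot(B,C,A)$. Finally, the $G$-stabilizer of $(B,C,A)$ coincides with the $T$-stabilizer of $(B,C)$ in the diagonal picture, and this is trivial since for each $k$ the nonvanishing of $B_{k\cdot}$ forces $t_k=1$. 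The orbit therefore has dimension $n^2$, completing the argument. The main conceptual ingredient is the Vandermonde reconstruction; the delicate step is the passage from ``determined up to $T$ on the diagonal slice'' to ``a single $G$-orbit on the full fiber,'' effected by the standard conjugation trick above.
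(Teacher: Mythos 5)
Your overall strategy coincides with the paper's: the heart of the argument is the Vandermonde inversion recovering the rank-one ``spectral pieces'' $ M^{(k)} $ of $ (B, C) $ from the invariants $ C A^{\ell} B $ when $ A $ is regular semisimple, followed by the observation that a \emph{nonzero} rank-one matrix determines its two factors up to a reciprocal scalar and that the resulting torus acts with trivial stabilizer. This is exactly the content of the paper's Corollary~\ref{corollary:reg-ss-orbits}. You reverse the logical order for the dimension count: you first establish that the generic fiber is a single $ n^2 $-dimensional orbit and then deduce $ \dim \Im \pi_W = \dim W - n^2 = n(p+q) $ from the fiber-dimension theorem, whereas the paper first computes $ \dim \Im \pi_W $ via the generically finite ($ n! $-to-one) parametrization $ \Psi : \C^n \times (\Det_1(\Mat_{q,p}))^n \to \Im \pi_W $. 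Your reordering is legitimate and avoids introducing $ \Psi $ altogether.

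There is, however, one genuine error: your open set $ U $ is wrongly chosen, and the claim that every fiber of $ \pi_W $ meeting $ U $ is a single closed orbit is false as stated. The conditions ``every row of $ B $ is nonzero, every column of $ C $ is nonzero'' are imposed in the standard basis, but your computation needs them \emph{after} conjugating $ A $ into diagonal form, and they are not preserved by that conjugation. Concretely, for $ n = 2 $, $ p = q = 1 $, take $ A = \bigl(\begin{smallmatrix} 1 & 1 \\ 0 & 2 \end{smallmatrix}\bigr) $, $ B = \bigl(\begin{smallmatrix} 1 \\ 1 \end{smallmatrix}\bigr) $, $ C = (1,1) $: this lies in your $ U $, but in an eigenbasis of $ A $ one gets $ \tilde{B} = {}^t(0,1) $, so $ M^{(1)} = 0 $; the fiber through this point also contains $ ({}^t(0,1), (0,2), \diag(1,2)) $, which is not conjugate to it, and the orbit is in fact not closed. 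The repair is routine: require instead that all spectral components $ M^{(k)} $ be nonzero --- a $ G $-invariant, still open and dense condition, which is precisely conditions (i)--(ii) of Corollary~\ref{corollary:reg-ss-orbits} pulled back to $ W $. With that change the rest of your argument goes through, including the conjugation step (which correctly propagates $ M^{(k)} \neq 0 $ to every point of the fiber via the uniqueness of the Vandermonde solution) and the trivial-stabilizer computation.
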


These orbits are called \emph{regular semisimple orbits}.
Another extreme cases are nilpotent orbits.
We investigate the null cone $ \nilpotents(W) \subset W $ in \S~\ref{section:sturucture-null-cone},
and get the following results.

\begin{theorem}[Theorem~\ref{theorem:irred-decomp-null-cone}]
The null cone $ \nilpotents(W) $ is reducible and it has $ n + 1 $ irreducible components 
$ C_k \subset \nilpotents(W) \; (0 \leq k \leq n) $ given in 
Lemma~\ref{lemma:irreducible-component-Ck}.
The dimension of the null cone is $ n^2 - n + n \cdot \max \{ p, q \} $
and $ \nilpotents(W) $ is equi-dimensional if and only if $ p = q $.
\end{theorem}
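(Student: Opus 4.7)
The plan is to realize each $C_k$ as the image of a Grassmannian-bundle construction, compute its dimension, and separate the $C_k$ from one another using two complementary semicontinuous invariants attached to any $(B,C,A) \in \nilpotents(W)$, namely
\[
U(A,B) := \sum_{\ell \ge 0} A^\ell \Image B \qquad \text{and} \qquad K(A,C) := \bigcap_{\ell \ge 0} A^{-\ell} \Ker C,
\]
the smallest $A$-invariant subspace of $V$ containing $\Image B$ and the largest one contained in $\Ker C$. Cayley--Hamilton shows that the conditions $\tau_k = 0$ and $\gamma_{i,j}^\ell = 0$ together amount to $A$ being nilpotent and $U(A,B) \subset K(A,C)$.

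For each $k$ with $0 \le k \le n$, I would set
\[
\widetilde C_k = \{(U, B, C, A) \in \Grass(k,V) \times W : A \text{ nilpotent},\ AU \subset U,\ \Image B \subset U \subset \Ker C\}
\]
and let $C_k$ be its image in $W$; properness of $\Grass(k,V)$ makes $C_k$ closed, and clearly $C_k \subset \nilpotents(W)$. The projection $\widetilde C_k \to \Grass(k,V)$ is a fiber bundle whose fiber over $U$ is
\[
\nilpotents(\lie p_U) \times \{B : \Image B \subset U\} \times \{C : U \subset \Ker C\},
\]
where $\nilpotents(\lie p_U) \cong \nilpotents(\gl_k) \times \nilpotents(\gl_{n-k}) \times \Hom(V/U, U)$ by the block decomposition of the parabolic subalgebra $\lie p_U$ stabilizing $U$. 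All factors are irreducible, so $\widetilde C_k$ and its image $C_k$ are irreducible, and summing dimensions yields
\[
\dim \widetilde C_k = k(n-k) + (k^2 - k) + ((n-k)^2 - (n-k)) + k(n-k) + kp + q(n-k) = n^2 - n + k(p-q) + qn.
\]
On the open subset of $\widetilde C_k$ where $A|_U$ admits a cyclic vector and $\Image B$ generates $U$ under $A$, the subspace $U$ is uniquely recovered as $U(A,B)$, so $\widetilde C_k \to C_k$ is generically injective and $\dim C_k = n^2 - n + k(p-q) + qn$. Every $(B,C,A) \in \nilpotents(W)$ lies in $C_{\dim U(A,B)}$, so $\nilpotents(W) = \bigcup_{k=0}^{n} C_k$.

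To show the $C_k$ are pairwise non-contained, observe that on every point of $C_k$ one has $\dim U(A,B) \le k$ (since $U(A,B)$ lies in the $k$-dimensional $A$-invariant sandwich) and $\dim K(A,C) \ge k$ (since that sandwich lies in $K(A,C)$), while on the generic locus $U(A,B) = U = K(A,C)$, so both bounds are simultaneously sharp. A hypothetical containment $C_k \subset C_{k'}$ would then force a generic point of $C_k$ --- having $\dim U(A,B) = \dim K(A,C) = k$ --- to satisfy the inequalities $\dim U(A,B) \le k'$ and $\dim K(A,C) \ge k'$ imposed by $C_{k'}$, yielding $k = k'$. Hence the $C_k$ are the $n + 1$ distinct irreducible components of $\nilpotents(W)$. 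Maximizing $k(p-q) + qn$ over $k \in \{0, \ldots, n\}$ gives $\dim \nilpotents(W) = n^2 - n + n \max\{p, q\}$, attained at $k = n$ if $p \ge q$ and at $k = 0$ if $p \le q$, with equidimensionality exactly when the linear function $k \mapsto k(p-q)$ is constant, i.e.\ when $p = q$.

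The main obstacle is the pairwise non-containment in the equidimensional case $p = q$, where all $\dim C_k$ coincide and a dimension argument alone does not suffice; what carries the day is the complementary pair of bounds $\dim U(A,B) \le k$ and $\dim K(A,C) \ge k$ on $C_k$, each of which is sharp on an open dense subset, so that combining both simultaneously forces $k = k'$ for any containment.
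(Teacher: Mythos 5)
Your argument is correct, and it reaches the theorem by a genuinely different route from the paper's. The paper works through the Hilbert--Mumford/Kempf--Kraft--Wallach--Popov machinery recalled in \S~\ref{subsection:theory-Kraft-Wallach-Popov}: it classifies the maximal unstable subsets of weights up to the Weyl group (Lemma~\ref{lemma:max-unstable-subset-of-weights}), takes the corresponding maximal unstable subspaces $U_k$ lying over the Borel subgroup, and realizes each $C_k$ as the image of the proper birational map $G\times_B U_k\to C_k$ (Lemma~\ref{lemma:irreducible-component-Ck}), the general theory guaranteeing that every irreducible component of $\nilpotents(W)$ occurs among these saturations. You instead describe $\nilpotents(W)$ directly as the locus where the generators of Theorem~\ref{thm:generators-of-invariants} vanish --- i.e.\ $A$ nilpotent and $U(A,B)\subset K(A,C)$ --- and replace the full flag variety by the Grassmannian incidence variety $\widetilde C_k\subset \Grass(k,V)\times W$ of $A$-stable $k$-planes sandwiched between $\Image B$ and $\Ker C$; your $C_k$ coincides with the paper's $G\cdot U_k$, since a nilpotent $A$ preserving a $k$-plane $U$ admits a complete $A$-stable flag refining $0\subset U\subset V$. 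The chief advantage of your route is that it makes explicit the one step the paper leaves to the general theory, namely that no $C_k$ is contained in another: your pair of semicontinuous invariants $\dim U(A,B)\leq k$ and $\dim K(A,C)\geq k$, each sharp on a dense open subset of $C_k$, settles this cleanly, and this is exactly the point where a pure dimension count fails in the equidimensional case $p=q$. What the paper's construction buys in exchange is a resolution of singularities of each component: $G\times_B U_k$ is a vector bundle over $G/B$, hence smooth, whereas your $\widetilde C_k$ has fibers containing $\nilpotents(\gl_k)\times\nilpotents(\gl_{n-k})$ and is therefore singular, so your generically injective proper surjection $\widetilde C_k\to C_k$ yields irreducibility and the dimension but not a resolution.
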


Finally, we get the structure of general (enhanced) nilpotent orbits contained in each component 
$ C_k $ in
Theorem~\ref{theorem:enhanced-nilpotent-orbit-generic}.

\medskip

\noindent
\textbf{Acknowledgement.}  
We thank Minoru Itoh for useful discussions on invariants.

\section{Setting}

Let $ V = \C^n $ be a vector space of dimension $ n $.  
We consider a natural action of 
$ G = \GL(V) $ 
on 
\begin{equation*}
W = W(p, q; r) 
:= V^{\oplus p} \oplus (V^{\ast})^{\oplus q} \oplus (V \otimes V^{\ast})^{\oplus r} 
\end{equation*}
in the obvious manner.  
In explicit matrix form, we can identify 
\begin{equation*}
W = (\C^n)^{\oplus p} \oplus (\C^{\ast n})^{\oplus q} \oplus (\Mat_n)^{\oplus r} 
= \Mat_{n,p} \oplus \Mat_{q, n} \oplus \Mat_n^r ,
\end{equation*}
with the action of $ g \in G $ on 
\begin{equation*}
(B, C, (A_1, \dots, A_r)) \in \Mat_{n,p} \oplus \Mat_{q, n} \oplus \Mat_n^r 
\end{equation*}
given by
\begin{equation*} 
g \cdot (B, C, (A_1, \dots, A_r)) 
= (g B, C g^{-1}, (\Ad(g) A_i)_{i = 1}^r ) .  
\end{equation*}
There are obvious invariants, which we list below.  
For a multi-index $ I = (i_1, i_2, \dots, i_{\ell}) \; (1 \leq i_k \leq r) $, 
let us write 
$ A_I = A_{i_1} A_{i_2} \cdots A_{i_{\ell}} $.  
We denote $ [n] = \{ 1, 2, \dots, n \} $ as usual, then the multi-index $ I $ above is 
an element in $ [r]^{\ell} $.  
We put 
\begin{align*}
\tau_I &:= \trace (A_I)  & & (I \in [r]^{\ell}), \\
\gamma_{i, j}^K &:= (C A_K B)_{i, j} & & 
(K \in [r]^{\ell}, 1 \leq i \leq q, 1 \leq j \leq p),
\end{align*}
where we allow $ \ell = 0 $ for $ K $, which means $ A_K = 1_n $ (identity matrix).  
These invariants are generators of the whole invariant ring, 
which is essentially due to a more general result of Le Bruyn and Procesi 
\cite[\S~3, Theorem~1]{Bruyn.Procesi.1990}
(see also \cite{Bruyn.Procesi.1987}, \cite{Itoh.2013}).

\begin{theorem}\label{thm:generators-of-invariants}
\begin{equation*}
\C[W]^G 
= \langle \tau_I, \gamma_{i,j}^K \mid I, K \in [r]^{\ell}, \ell \geq 0, 
i \in [q], j \in [p] \rangle/\text{\upshape alg.}
\end{equation*}
\end{theorem}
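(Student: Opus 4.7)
The plan is to derive the theorem as an application of Weyl's First Fundamental Theorem (FFT) for the vector invariants of $\GL(V)$, or equivalently as a specialization of the Le Bruyn--Procesi theorem on invariants of quiver representations. First I would multi-grade $\C[W]$: since $W$ decomposes as a direct sum of $\GL_n$-stable summands ($p$ copies of $V$, $q$ copies of $V^*$, and $r$ copies of $V \otimes V^*$), the polynomial algebra $\C[W] = \Sym(W^*)$ has a corresponding multigrading with finite-dimensional graded pieces. By standard polarization, it then suffices to produce all $\GL_n$-invariants in the multilinear part of the tensor algebra on $W^*$.

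Next I would rearrange tensor factors using the $\GL_n$-isomorphism $(V \otimes V^*)^* \cong V \otimes V^*$, so that a multilinear invariant lives in some $V^{\otimes N} \otimes (V^*)^{\otimes M}$. Weyl's FFT for $\GL(V)$ says this space vanishes unless $N = M$, in which case it is spanned by the $N!$ complete contractions indexed by bijections between the $V$-slots and the $V^*$-slots. Hence every $\GL_n$-invariant in a given multi-degree is a linear combination of such contractions.

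The third step is to read these contractions graph-theoretically. Attach to each $B$-column a pure $V$-source, to each $C$-row a pure $V^*$-sink, and to each occurrence of an $A_k$ a node with one $V^*$-input and one $V$-output. A complete pairing then breaks into connected components of two types: (a) oriented cycles consisting entirely of $A$-nodes, contributing a trace monomial $\tau_I = \trace A_I$, and (b) oriented paths starting at a $C$-row, passing through $A$-nodes, and ending at a $B$-column, contributing a matrix-entry $\gamma_{i,j}^K = (C A_K B)_{i,j}$. Multiplying over components exhibits the invariant as a polynomial in the stated generators.

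The main technical obstacle is the bookkeeping in the polarization/restitution step: different pairings can yield the same invariant (cyclic rotation of an all-$A$ cycle, reordering of components, etc.), and one must confirm that classical relations of Cayley--Hamilton and permanent type do not obstruct the stated set from \emph{generating} (rather than merely spanning after restriction). A cleaner alternative, which is essentially the route of \cite{Bruyn.Procesi.1990}, is to realize $W$ as the representation space of the quiver with vertices $\{P,N,Q\}$ of dimensions $(p,n,q)$, one arrow $P \to N$, one arrow $N \to Q$, and $r$ loops at $N$: under $\GL_p \times \GL_n \times \GL_q$ the invariants are generated by traces along oriented cycles, and since $P$ has no incoming and $Q$ no outgoing arrows, every cycle sits at $N$ and produces a $\tau_I$; restricting the group to $\GL_n$ promotes each oriented path from $P$ to $Q$ to a $\Mat_{q,p}$-valued invariant whose entries are exactly the $\gamma_{i,j}^K$.
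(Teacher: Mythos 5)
Your primary route---polarization down to the multilinear part, Weyl's FFT for $\GL(V)$ identifying the invariants of $V^{\otimes N}\otimes (V^{*})^{\otimes M}$ with the span of complete contractions (nonzero only for $N=M$), and the graph-theoretic decomposition of each contraction into pure $A$-cycles (giving $\tau_I$) and $B$-to-$C$ paths (giving $\gamma_{i,j}^K$)---is a valid argument and genuinely different from the paper's: it is the classical Procesi-style proof. The bookkeeping you flag as the main obstacle is not actually an obstruction in characteristic $0$: restitution of a spanning set of multilinear invariants yields a generating set of the (multi)graded invariant ring, and Cayley--Hamilton-type identities only enter the \emph{second} fundamental theorem (relations among generators), which the statement does not require. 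What this route buys is self-containedness; what it costs is exactly the polarization formalism the paper avoids by citing Le Bruyn--Procesi.

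Your ``cleaner alternative,'' however, has a real gap as stated, and comparing it with the paper's quiver shows why. The paper uses \emph{two} vertices with dimension vector $(n,1)$: $r$ loops at the first vertex, $p$ arrows $2\to 1$ and $q$ arrows $1\to 2$. The extra group factor is then only $\GL_1$, whose action on $W$ is absorbed by the center of $\GL_n$, so the $\GL_n\times\GL_1$-invariants and the $\GL_n$-invariants literally coincide, and every oriented cycle through the $1$-dimensional vertex factors (because $V(2)=\C$) into a product of scalars $\gamma_{i,j}^K$. With your three-vertex quiver of dimensions $(p,n,q)$ and single arrows $P\to N\to Q$, there is no oriented cycle through $P$ or $Q$, so Le Bruyn--Procesi only produces the $\tau_I$ as generators of the invariants of the full group $\GL_p\times\GL_n\times\GL_q$; the step where ``restricting the group to $\GL_n$ promotes each oriented path to a $\Mat_{q,p}$-valued invariant'' is precisely the content that needs proof and is not supplied by the quoted theorem (passing from invariants of a larger group to those of a subgroup is not a formal consequence). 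To repair it, either adopt the paper's $(n,1)$ quiver or supplement your quiver with an FFT argument for the residual $\GL_p\times\GL_q$-action---at which point you are essentially back to your first route.
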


\begin{proof}
In this proof, we will largely follow the notation of \cite{Bruyn.Procesi.1990}.  
Let us denote a connected quiver by $ Q $ and by $ \alpha $ its dimension vector.  
For a representation space $ R(Q, \alpha) $ of $ Q $, Theorem~1 in \cite{Bruyn.Procesi.1990} states that 
the invariant ring $ \C[Q, \alpha]^{\GL(\alpha)} $ 
is generated by traces of oriented cycles.  
So we will consider a quiver $ Q $ of two vertices $ Q_0 = \{ 1, 2 \} $ with arrows 
\begin{equation*}
 Q_1 = \{ a_i \mid 1 \leq i \leq r \} 
\cup \{ b_i \mid 1 \leq i \leq p \} 
\cup \{ c_i \mid 1 \leq i \leq q \} ,
\end{equation*}
where $ a_i $'s are loops connecting $ 1 $ and itself (i.e., $ h(a_i) = t(a_i) = 1 $); 
$ b_i $'s are arrows from $ 2 $ to $ 1 $ ($ h(b_i) = 2 $, $ t(b_i) = 1 $); 
and $ c_i $'s are arrows from $ 1 $ to $ 2 $ ($ h(c_i) = 1 $, $ t(c_i) = 2 $)．
Take a dimension vector 
$ \alpha = (\alpha(1), \alpha(2)) = (n, 1) $, so that 
$ V(1) = \C^n $ and $ V(2) = \C $．
Then our $ W = W(p, q, r) $ coincides with 
the representation space $ R(Q, \alpha) $.  

The invariants are considered with respect to the action of 
$ G(\alpha) = \GL_n \times \GL_1 $.  
However, the representation image of $ G(\alpha) $ on 
$ W = R(Q, \alpha) $ and that of $ \GL_n $ are the same 
because the action of the torus $ \GL_1 $ on $ V(2) = \C $ can be recaptured by 
the center of $ \GL_n $.  
So the both invariant rings for $ G(\alpha) $ and $ \GL_n $ are the same.  

\skipover{
However, the action of the torus $ \GL_1 $ on $ V(2) = \C $ by conjugation coincides with 
the action of the center of $ \GL_n $.   
Thus the invariants of $ G(\alpha) $ are just those for the action of $ \GL_n $, 
which coincides with ours．
}

Let us consider any closed cycles.  
Since we take traces, we can start from any vertices contained in the cycle.  
If it only contains the vertex $ 1 $, the traces are $ \tau_I $'s．
If it contains the vertex $ 2 $, 
we will start from $ 2 $ which necessarily ends in $ 2 $.   
Decompose the cycle into several cycles 
which start from $ 2 $ and end in $ 2 $.  
Since $ V(2) = \C $ is $ 1 $-dimensional, 
a decomposed cycle starting from $ 2 $ represents a scalar being equal to its trace.  
Thus the trace of the cycle which we are considering is a product of various $ \gamma_{i, j}^K $'s.  
\end{proof}

Let us denote 
$ \pi = \pi_W : W \to W \git G $, 
an affine quotient map by the action above.  
As a set, the quotient $ W \git G $ corresponds to the set of closed $ G $-orbits in $ W $.  
It is known that these closed orbits 
are precisely the set of equivalence classes of completely reducible representations of a quiver corresponding to $ W $.  

The followings are our subjects studied in this article.

\begin{problem}
\begin{thmenumerate}
\item
Let $ \nilpotents(W) = \pi_W^{-1}( \pi_W(0) ) $ be the nilpotent variety, 
which consists of the nilpotent elements $ x $ with the property $ \closure{ G \cdot x } \ni 0 $．
Investigate detailed structures of $ \nilpotents(W) $.  
In particular, we are interested in $ \dim \nilpotents(W) $;  
irreducible components;  
orbit structure; if it is reduced or not.  

\item
Are generic fibers of $ \pi $ a single orbit?  
If this is affirmative, we know generic orbits are closed.  
What are their dimensions?

\item
In general, determine the orbit space structure of $ W $.  

\item
Analyze the singularities of the quotient space 
$ W \git G $.
\end{thmenumerate}
\end{problem}

Some comments are in order.  

The nilpotent variety $ \nilpotents(W) $ is the ``worst'' fiber.  
So we are strongly interested in its structure.  
At the same time, we are also interested in the coinvariants (or the harmonics), which are 
the ``functions'' on  the nilpotent variety.  
To study it, the structure of irreducible components of $ \nilpotents(W) $ or its reducedness is very important.

On the other hand, general fibers are supposed to have ``best'' properties we can expect.  
So this will be helpful to study the quotient space, at least its smooth part.  
It would be too ambitious to expect getting very explicit orbit structure of the whole space $ W $.  
Also it seems to be a difficult problem to clarify the structure of the singularities of the quotient space.

\section{Enhanced adjoint action}

In the following, we restrict ourselves to the case 
$ r = 1 $, so that 
$ W = \Mat_{n,p} \oplus \Mat_{q, n} \oplus \Mat_n $ 
on which $ G = \GL_n $ acts.  
In the matrix form, $ g \in \GL_n $ acts on 
$ (B, C, A) \in \Mat_{n,p} \oplus \Mat_{q, n} \oplus \Mat_n $ via 
$ g \cdot (B, C, A) = (g B, C g^{-1}, \Ad(g) A) $.  
We call this action the \emph{enhanced adjoint action}.  

Now Theorem~\ref{thm:generators-of-invariants} gives a set of generators 
of $ G $-invariants:
\begin{align}
\tau_k &:= \trace (A^k)  & & (1 \leq k \leq n), 
\label{eq:tau-k}
\\
\gamma_{i, j}^k &:= (C A^k B)_{i, j} & & 
(0 \leq k \leq n - 1, \; 1 \leq i \leq q, \; 1 \leq j \leq p).
\label{eq:gamma-ij-k}
\end{align}
Note that $ A^n $ is a linear combination of $ A^k $'s $ (0 \leq k \leq n - 1) $ 
thanks to Cayley-Hamilton's formula, 
so that we don't need higher powers of $ A $ in $ \tau_k $ or $ \gamma_{i, j}^k $.  
Let us denote the affine quotient map by 
\settowidth{\tempwidth}{$ = \bigl( (\tau_k)_{k = 1}^n ; ( C A^k B)_{k = 1}^n \bigr) $}
\begin{equation}\label{eq:quotient-map-Phi}
\vcenter{\xymatrix @R-8ex @M+1.5ex @C-3ex @L+.5ex @H+1ex {
\makebox[.5ex][r]{$\pi_W : \;\;$} W \ar[rr] & & \C^n \oplus (\Mat_{q,p})^n 
& \hspace*{.8\tempwidth}
\\
(A, B, C) \ar@{|->}[rr] & & \Bigl( (\tau_k)_{k = 1}^n ; \bigl( (\gamma_{i,j}^k)_{i, j} \bigr)_{k = 0}^{n - 1} \Bigr)
\makebox[.5ex][l]{$ = \bigl( (\tau_k)_{k = 1}^n ; ( C A^k B)_{k = 0}^{n - 1} \bigr) $}
&
}}
\end{equation}
By the general theory of quotients, 
we know the image $ \Im \pi_W $ is a closed subvariety of 
$ \C^n \oplus (\Mat_{q, p})^n $.  
Let us denote by $ \Det_r(\Mat_{q, p}) $ the determinantal variety 
consisting of matrices in $ \Mat_{q, p} $ of rank less than or equal to $ r $.  
Clearly 
$ \Im \pi_W $ is contained in 
$ \C^n \times \Det_n(\Mat_{q, p})^n $.  

\begin{theorem}\label{theorem:general-fiber-EAO}
Under the setting above, the image $ \Im \pi_W $ is isomorphic to 
the affine quotient $ W \git G = \Spec(\C[W]^G) $.  We have: 
\begin{thmenumerate}
\item\label{theorem:general-fiber-EAO:item:dim-quotient-space}
There is a dominant map 
\begin{equation*}
\Psi : \C^n \times (\Det_1(\Mat_{q, p}))^n \to \Im \pi_W ,
\end{equation*}
whose restriction to a dense open subset of  
$ \C^n \times (\Det_1(\Mat_{q, p}))^n $ gives an affine quotient map 
under the natural action of $ S_n $ to a dense open subset of $ \Im \pi_W $.  
Consequently, we get 
$ \dim W \git G = \dim \Im \pi_W = n (p + q)  $, and 
a general fiber of $ \pi_W $ is of dimension $ n^2 $.  
\item\label{theorem:general-fiber-EAO:item:p=1-or-q=1}
If $ p = 1 $ or $ q = 1 $, the quotient map $ \pi_W $ is surjective, and 
$ \Im \pi_W = \C^n \oplus (\Mat_{q, p})^{\oplus n} $ is an affine space.  
In particular, the quotient map $ \pi_W $ is coregular, and 
$ \C[W]^G $ is a polynomial ring of the fundamental invariants listed in 
\eqref{eq:tau-k} and \eqref{eq:gamma-ij-k}.
\end{thmenumerate}
\end{theorem}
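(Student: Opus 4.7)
The plan is to reduce to the case where $A$ is diagonalizable with distinct eigenvalues, where the invariants decompose cleanly into rank-$1$ pieces. For $A = \diag(a_1, \ldots, a_n)$, $B \in \Mat_{n,p}$ with rows $b_l^T$ and $C \in \Mat_{q,n}$ with columns $c_l$, a direct matrix multiplication yields $\tau_k(A) = \sum_l a_l^k$ and $C A^k B = \sum_l a_l^k\, c_l b_l^T$. Setting $M^{(l)} := c_l b_l^T \in \Det_1(\Mat_{q,p})$, I would define
\begin{equation*}
\Psi\bigl((a_l), (M^{(l)})\bigr) := \Bigl(\bigl({\textstyle\sum_l} a_l^k\bigr)_{k=1}^n,\; \bigl({\textstyle\sum_l} a_l^k M^{(l)}\bigr)_{k=0}^{n-1}\Bigr),
\end{equation*}
so that $\Im \Psi \subseteq \Im \pi_W$, and $\Psi$ is manifestly equivariant for the $S_n$-action simultaneously permuting the indexed pairs $(a_l, M^{(l)})$.

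Next, I would analyze $\Psi$ on the dense open $U$ of $\C^n \times \Det_1(\Mat_{q,p})^n$ where the $a_l$ are pairwise distinct and every $M^{(l)}$ is nonzero. On $U$ the Vandermonde matrix $(a_l^{k})_{0 \le k \le n-1,\,1 \le l \le n}$ is invertible, so the image data of $\Psi$ determine the multiset $\{a_l\}$ (via the power sums) and, for any fixed ordering of the $a_l$, recover each $M^{(l)}$ entrywise by Vandermonde inversion; hence the geometric fibers of $\Psi|_U$ are precisely the free $S_n$-orbits. Since every $(A, B, C) \in W$ with $A$ regular semisimple is $G$-conjugate to one with $A$ diagonal, $\Im \Psi$ contains a dense open of $\Im \pi_W$, so $\Psi$ is dominant. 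Using $\dim \Det_1(\Mat_{q,p}) = p + q - 1$, the source has dimension $n(p+q)$, and generic finiteness forces $\dim \Im \pi_W = n(p+q)$; the generic fiber of $\pi_W$ then has dimension $(n^2 + np + nq) - n(p+q) = n^2$, and $\Im \pi_W \cong W \git G$ by standard reductive GIT. The subtle step is promoting the quotient from geometric to affine/categorical; I would handle this by further restricting $U$ to a smooth open locus where free finite-group actions automatically yield categorical quotients.

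For part~(2), suppose $p = 1$; the case $q = 1$ is symmetric. Then the ambient codomain $\C^n \oplus (\Mat_{q,1})^n = \C^{n(1+q)}$ is an irreducible affine space of dimension $n(p+q)$. By part~(1), $\Im \pi_W$ is an irreducible closed subvariety of the same dimension, hence equals the entire ambient space; thus $\pi_W$ is surjective onto $\C^n \oplus (\Mat_{q,p})^n$. Consequently $\C[W]^G$ is the coordinate ring of an affine space, i.e.\ a polynomial ring in $n(p+q)$ variables; since the $n + nq = n(p+q)$ fundamental invariants of \eqref{eq:tau-k} and \eqref{eq:gamma-ij-k} generate $\C[W]^G$ by Theorem~\ref{thm:generators-of-invariants}, they must be algebraically independent polynomial coordinates, proving coregularity.
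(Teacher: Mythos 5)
Your proposal is correct and follows essentially the same route as the paper: fix a regular semisimple (diagonal) $A$, decompose $CA^kB$ into Vandermonde-weighted rank-one pieces, define the $S_n$-invariant map $\Psi$ on $\C^n\times(\Det_1(\Mat_{q,p}))^n$, invert the Vandermonde matrix on the generic locus to identify fibers with free $S_n$-orbits and deduce dominance and the dimension count, and then for part (2) conclude surjectivity from the equality of dimensions together with closedness of $\Im\pi_W$ in the irreducible ambient affine space. The paper likewise leaves the categorical-quotient refinement to a suitable dense open subset (and even cautions in a remark that $\Psi$ itself is not globally a quotient map), so your treatment matches in substance.
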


\begin{proof}
Let us fix a generic diagonal matrix 
$ A = t = \diag(t_1, \dots, t_n) $, where 
$ t_i \neq t_j \; (i \neq j) $.  
For $ 1 \leq r \leq n $, put 
\begin{equation*}
X^{(r)} = \begin{pmatrix}
c_{1,r} \\ c_{2, r} \\ \vdots \\ c_{p, r} 
\end{pmatrix}
( b_{r, 1}, b_{r, 2}, \dots, b_{r,q}) \in \Det_1(\Mat_{q, p}), 
\end{equation*}
where $ c_{i, j} $ denotes the $ (i, j) $-element of the matrix $ C \in \Mat_{q, n} $ and 
similarly $ b_{i,j} $ for $ B \in \Mat_{n, p} $.
We get 
\begin{equation}\label{eq:definition-of-Gamma}
C A^k B = (\gamma_{i,j}^k)_{i, j} 
= \Bigl( \sum_{r = 1}^n c_{i,r} t_r^k b_{r, j} \Bigr)_{i, j} 
= \sum_{r = 1}^n t_r^k X^{(r)} =: \Gamma^{(k)} .
\end{equation}
Thus, in the matrix form, it holds 
\begin{align}
\label{eq:DX=Gamma}
&
\begin{pmatrix}
1 & 1 & \cdots & 1 \\
t_1 & t_2 & \cdots & t_n \\
\vdots & \vdots & \ddots & \vdots \\
t_1^{n -1} & t_2^{n -1} & \cdots & t_n^{n -1} 
\end{pmatrix}
\begin{pmatrix}
X^{(1)} \\
X^{(2)} \\
\vdots \\
X^{(n)} 
\end{pmatrix}
=
\begin{pmatrix}
\Gamma^{(0)} \\
\Gamma^{(1)} \\
\vdots \\
\Gamma^{(n-1)} 
\end{pmatrix}, 
\intertext{hence}
&
\begin{pmatrix}
X^{(1)} \\
X^{(2)} \\
\vdots \\
X^{(n)} 
\end{pmatrix}
=
D(t)^{-1} 
\begin{pmatrix}
\Gamma^{(0)} \\
\Gamma^{(1)} \\
\vdots \\
\Gamma^{(n-1)} 
\end{pmatrix}, 
\label{eq:X-is-D-inverse-Gamma}
\end{align}
where $ D(t) = (t_j^{i - 1})_{i, j} $ 
denotes the Vandermonde matrix in the former equation \eqref{eq:DX=Gamma}.
We define a map $ \Psi $ 
from 
$ U = \C^n \times (\Det_1(\Mat_{q, p}))^n $ to $ \Im \pi_W $ by
\footnote{The map $ \Psi $ is a priori defined to 
be one from $ U $ to $ \C^n \oplus (\Mat_{q, p})^n $.  
However, since $ \Image \pi_W $ is closed, we know the image of $ \Psi $ is contained in $ \Im \pi_W $. 
See below.}
\begin{equation}\label{eq:Sn-invariant-map-Psi}
\vcenter{
\xymatrix @R-8ex @M+1.5ex @C-3ex @L+.5ex @H+1ex {
\makebox[.1ex][r]{$\Psi : \;\;$} 
U \;\; \ar@{->}[rr] & & \Im \pi_W
& \hspace*{.8\tempwidth}
\\
(t; (X^{(k)})_{k = 1}^n) \ar@{|->}[rr] & & 
\Bigl( (\sum\limits_{i = 1}^n t_i^k )_{k = 1}^n ; (\Gamma^{(k)})_{k = 1}^n 
\makebox[.5ex][l]{$ = D(t) (X^{(k)})_{k = 1}^n \Bigr) $}
&
}}
\end{equation}
The map $ \Psi $ is generically an $ n! $-fold covering map, 
and it is invariant under $ S_n $ which acts on $ U $ by the diagonal coordinate permutation on 
the both factor
\footnote{Unfortunately, $ \Psi $ may not be a quotient map.  
See Remark~\ref{remark:Sn-quotient-map}}.

Take $ (\tau; (\Gamma(k))_k) \in \Im \pi_W $ 
for which $ \tau $ is in an image of regular semisimple $ A $.  
Those elements consist an open dense set $ (\Im \pi_W)' \subset \Im \pi_W $.  
Then we can recover 
$ X(k) $'s via the formula 
\eqref{eq:X-is-D-inverse-Gamma}, 
if we pick a $ t $ from the fiber of $ \tau $ so that $ t_i \neq t_j \; (i \neq j) $ hold.
Thus we have a surjective covering map from an open dense subset of $ U $ to 
$ (\Im \pi_W)' $.  
Consequently, the image $ \Im \Psi $ is contained in $ \Im \pi_W $.
Thus we conclude $ \Psi : U \to \Im \pi_W $ is dominant, and 
\begin{equation*}
\dim \Im \pi_W = \dim U = n + n ( p + q - 1) = n (p + q), 
\end{equation*}
where we used 
$ \dim \Det_1(\Mat_{q, p}) = p + q - 1 $.  
Comparing the dimension, we know the dimension of a generic fiber 
of $ \pi_W $ is $ n^2 = \dim W - \dim \Im \pi_W $.  

Now let us assume $ p = 1 $ or $ q = 1 $.  
Then $ \Mat_{q, p} = \C^q $ or $ \C^p $, and
get $ \dim (\C^n \oplus (\Mat_{q, p})^{\oplus n}) = n ( p + q) = \dim \Im \pi_W $
(the last equality follows from \eqref{theorem:general-fiber-EAO:item:dim-quotient-space}).
Since the image $ \Im \pi_W $ is closed in $ \C^n \oplus (\Mat_{q, p})^{\oplus n} $, 
we have a surjective quotient map $ \pi_W : W \to \C^n \oplus (\Mat_{q, p})^{\oplus n} $ 
so that $ W \git G \simeq \C^n \oplus (\Mat_{q, p})^{\oplus n} $, an affine space.  
This means the invariants are algebraically independent and 
$ \C[W]^G $ is a polynomial ring.
\end{proof}

\begin{corollary}\label{corollary:reg-ss-orbits}
Let us denote the quotient map by 
$ \pi_W : W \to \C^n \oplus (\Mat_{q, p})^n $ as in 
\eqref{eq:quotient-map-Phi}.  
And assume that 
$ (\tau; \Gamma) = (\tau; (\Gamma^{(k)})_{k = 1}^n) \in \C^n \oplus (\Mat_{q, p})^n $ 
satisfies the following conditions {\upshape (i)} and {\upshape (ii)}.
\begin{itemize}
\item[(i)\;]\ 
There exists a regular diagonal matrix $ t $ with 
$ \tau = (\tau_k(t))_{k = 1}^n $, 
i.e.,   
$ \tau \in \C^n $ with the $ k $-th coordinate being 
$ \tau_k = \sum_{i = 1}^n t_i^k $, where $ t_i \neq t_j \; (i \neq j) $.
\item[(ii)]\ 
$ \Gamma^{(k)} \; (0 \leq k \leq n-1) $ 
corresponds to $ X^{(k)} $ via \eqref{eq:X-is-D-inverse-Gamma}, 
which are of rank $ 1 $.
\end{itemize}
Then 
$ (\tau; \Gamma) $ is in the image $ \Im \pi_W $ and 
the $ \dim \pi_W^{-1}(\tau; \Gamma) = n^2 $, i.e., 
the fiber of $ (\tau; \Gamma) $ 
is generic and of dimension $ n^2 $.  
Moreover, it is a single closed $ G $-orbit.
\end{corollary}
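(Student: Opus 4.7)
The plan is to exhibit an explicit preimage $(B,C,A)$ of $(\tau;\Gamma)$ under $\pi_W$ and then to show that every preimage is $G$-conjugate to this one. Once this is established, the fiber $\pi_W^{-1}(\tau;\Gamma)$ coincides with a single $G$-orbit, and since fibers of morphisms are closed, that orbit is automatically closed. Comparing its dimension with $n^2 = \dim W - \dim\Im\pi_W$ from Theorem~\ref{theorem:general-fiber-EAO}\,(1) will then give the dimension claim.

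For the construction step, I set $A := t = \diag(t_1,\dots,t_n)$ and, using condition~(ii), factor each rank-one matrix as $X^{(r)} = c_r b_r$ with $c_r \in \C^q$ a nonzero column and $b_r$ a nonzero row in $(\C^p)^{\ast}$. Assembling $C \in \Mat_{q,n}$ whose $r$-th column is $c_r$ and $B \in \Mat_{n,p}$ whose $r$-th row is $b_r$, the computation in \eqref{eq:definition-of-Gamma} gives $CA^k B = \sum_r t_r^k c_r b_r = \sum_r t_r^k X^{(r)} = \Gamma^{(k)}$, while $\trace A^k = \tau_k$ by condition~(i). Hence $\pi_W(B,C,A) = (\tau;\Gamma)$, proving in particular $(\tau;\Gamma) \in \Im\pi_W$.

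For the uniqueness step, let $(B',C',A')$ be any element of the fiber. The identities $\trace(A')^k = \sum_i t_i^k$ for $1 \le k \le n$ determine the characteristic polynomial of $A'$ and force its spectrum to be exactly $\{t_1,\dots,t_n\}$, so $A'$ is regular semisimple. After conjugating by a diagonalizing element (and composing with a permutation matrix to restore the chosen ordering of the $t_i$), I may assume $A' = t$. Writing $b'_r, c'_r$ for the rows of $B'$ and columns of $C'$, the relation $\sum_r t_r^k c'_r b'_r = \Gamma^{(k)}$ combined with invertibility of the Vandermonde matrix $D(t)$ yields $c'_r b'_r = X^{(r)}$ for each $r$. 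Since each $X^{(r)}$ has rank exactly one, both $c'_r$ and $b'_r$ are nonzero, and uniqueness of rank-one factorizations produces scalars $\lambda_r \in \C^{\ast}$ with $c'_r = \lambda_r c_r$ and $b'_r = \lambda_r^{-1} b_r$. The diagonal element $h = \diag(\lambda_1^{-1},\dots,\lambda_n^{-1})$ then satisfies $h\cdot(B,C,t) = (B',C',t)$. Applied in the case $(B',C',A') = (B,C,t)$, the same argument shows the stabilizer of $(B,C,t)$ is trivial (a stabilizing element must be diagonal and must fix every nonzero row $b_r$), so the orbit has dimension $n^2$.

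The main subtlety is the permutation ambiguity in the diagonalization step: a diagonalization of $A'$ a priori only produces $\diag(t_{\sigma(1)},\dots,t_{\sigma(n)})$ for some $\sigma \in S_n$, which must be absorbed by a further permutation matrix in $G$; distinctness of the $t_i$ makes this harmless. A minor secondary point is that closedness of the orbit is not proved directly but follows \emph{a posteriori} from the fact that the entire fiber, which is closed as the preimage of a point, has been shown to coincide with the single orbit.
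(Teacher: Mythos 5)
Your proof is correct and follows essentially the same route as the paper's: construct the explicit preimage $(t,B,C)$ from the Vandermonde inversion and the rank-one factorizations, then show every fiber element is $G$-conjugate to it and that the stabilizer is trivial. You merely fill in details the paper leaves terse (the diagonalization of $A'$ up to permutation, the scalar ambiguity $\lambda_r$ absorbed by a diagonal group element, and the a posteriori closedness of the orbit as the full fiber), all of which are sound.
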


\begin{proof}
By condition (i), 
we can choose a regular diagonal matrix $ t $ with 
$ \tau = (\tau_k(t))_{k = 1}^n $.  
Thus we can define $ (X^{(k)}) = D(t)^{-1} \Gamma $ via \eqref{eq:X-is-D-inverse-Gamma}.  
If $ X^{(k)} $ is of rank $ 1 $, 
then we can write $ X^{(k)} = c_k \transpose{b_k} $ for certain 
$ c_k \in \C^q, \, b_k \in \C^p $.  
From these vectors, we can restore $ \transpose{B} = (b_1, \dots, b_n) $ and $ C = (c_1, \dots, c_n) $.  
Thus $ (\tau; \Gamma) = \pi_W(t, B, C) \in \Im \pi_W $.  

There is not so much choice for the fiber.  
We know the fiber over $ \tau $ of the adjoint quotient is just the conjugation of $ t $, 
which is of dimension $ n^2 - n $.  
For $ B $ and $ C $, since any column of $ B $ and $ C $ is nonzero, 
we can only multiply scalars column by column, which is of dimension $ n $.  

It is now clear that any element in the fiber can be obtained from $ (t, B, C) $ 
through the action of $ G $. 
Since the stabilizer of the fiber $ (t, B, C) $  is trivial, we again get the right dimension $ n^2 $.  
\end{proof}

\begin{remark}
Let us assume $ p = 1 $ or $ q = 1 $.  
In this case, the action of $ G = \GL_n(\C) $ on $ W $ is coregular, i.e., 
the quotient space is an affine space and the generators listed in 
\eqref{eq:tau-k} and 
\eqref{eq:gamma-ij-k}
are algebraically independent. 

However, 
if we consider an action of the simple group $ \SL_n(\C) $ instead of $ \GL_n(\C) $, 
this action is not coregular (coregular actions are classified for simple groups, see 
\cite{Schwarz.coregular.1978}, \cite{Adamovich.Golovina.coregular.1983}).

To see this, let us assume $ p = q = 1 $ for simplicity.  
Consider 
two invariants $ D_1, D_2 $ with respect to the action of $ \SL_n $ defined as follows.
For $ (u, v, A) \in V \oplus V^{\ast} \oplus \Mat_n $ (we consider $ V = \C^n $ as a column vector), 
we put 
\begin{equation*}
D_1(u, v, A) = \det \begin{pmatrix} v \\ v A \\ v A^2 \\ \vdots \\ v A^{n - 1} \end{pmatrix}, \qquad
D_2(u, v, A) = \det (u, A u, A^2 u, \dots, A^{n - 1} u)
\end{equation*}
Both $ D_1 $ and $ D_2 $ are clearly $ \SL_n $-invariants, and 
they are not $ \GL_n $-invariants so that 
they cannot be expressible by using $ \tau_k $ and $ \gamma^k $ above \footnote{
Note that, since $ p = q = 1 $, we do not need subscription $ i $ and $ j $ for $ \gamma_{i, j}^k $}.  
However, it is easy to see 
\begin{equation*}
D_1 \cdot D_2 = \det \bigl( v A^{i + j} u \bigr)_{i,j} = \det (\gamma^{i + j})_{i, j}
\end{equation*}
which gives a relation.  
This shows that the action of $ \SL_n $ is not coregular.  

When $ p > 1 $ or $ q > 1 $, similar arguments lead to the same conclusion.

However, even if it is not coregular, it seems the $ \SL_n $-orbit structure has good properties.  
We will discuss it in future.
\end{remark}

\begin{remark}\label{remark:Sn-quotient-map}
Let us consider a toy model for the map \eqref{eq:Sn-invariant-map-Psi}.  
Assume that $ V $ is a vector space and 
$ S_n $ acts on $ \C^n \times V^n $ as the diagonal coordinate permutation.  
\begin{equation*}
\xymatrix @R-1ex @M+1.5ex @C-3ex @L+.5ex @H+1ex {
\C^n \times V^n \ni (a_1, \dots, a_n; v_1, \dots, v_n) 
\ar[d]^{\psi}
\ar[rrd]^{\pi}
& & 
\\
\C^n \times V^n \ni \Bigl( (\sum_{i = 1}^n a_i^k)_{k = 1}^n ; ( \sum_{i = 1}^n a_i^k v_i)_{k = 1}^n  \Bigr) 
& & \ar[ll]_-{\varphi} (\C^n \times V^n) / S_n 
}
\end{equation*}
Consider a closed set 
$ Z = \{ (a; v) \mid a_i v_i = u  \; (1 \leq i \leq n) \} $ 
for a fixed non-zero vector $ u  $, which is stable under the $ S_n $-action.  
The image $ \psi(Z) $ does not contain an element of the form $ (0 ; w) $, 
however its closure contains $ (0; (n \, u , 0, \dots, 0)) $.  
Thus the image $ \psi(Z) $ is not closed, hence $ \psi $ is not a quotient map.
\end{remark}


\section{Structure of the null cone}\label{section:sturucture-null-cone}

We will study the structure of null cone 
$ \nilpotents(W) = \pi_W^{-1}(\pi_W(0)) $ in this section.  
For this, we follow the strategy of Popov \cite{Popov.2003} and 
Kraft and Wallach \cite{Kraft.Wallach.2006}.  
We briefly recall their theory.

\subsection{}\label{subsection:theory-Kraft-Wallach-Popov}

In this subsection, we consider a general situation so that 
the notation is independent of those in the former (sub)sections.  

Let $ G $ be a connected reductive algebraic group $ G $ over $ \C $, which acts on a vector space $ V $ linearly.  
Let $ \pi : V \to V \git G $ be the quotient map, and 
\begin{equation*}
\nullcone_V := \pi^{-1}(\pi(0)) = \{ v \in V \mid \closure{G v} \ni 0 \} 
\end{equation*}
the null cone.
For any one parameter subgroup (abbreviated as ``1-PSG'') 
$ \lambda : \Cbatsu \to G $, 
we define 
$ V(\lambda) := \{ v \in V \mid \lim_{t \to 0} \lambda(t) v = 0 \} $.  
Then $ v \in V $ is in the null cone $ \nullcone_V $ if and only if 
$ v \in V(\lambda) $ for a suitable 1-PSG $ \lambda $ (Hilbert-Mumford criterion).  

Let $ T \subset G $ be a maximal torus.  We fix $ T $ once and for all, 
and denote by $ X^{\ast}(T) $ the character group of $ T $.  
Then $ V $ has the weight space decomposition 
\begin{equation*}
V = \bigoplus_{\gamma \in X^{\ast}(T)} V_{\gamma}, 
\qquad
V_{\gamma} := \{ v \in V \mid t v = \gamma(t) v \;\; (t \in T) \}.
\end{equation*}
We denote the set of 1-PSGs $ \lambda : \Cbatsu \to T $ by $ X_{\ast}(T) $.  
Then there is a natural pairing $ \langle - , - \rangle : X_{\ast}(T) \times X^{\ast}(T) \to \Z $ determined as follows.    
For $ (\lambda, \gamma) \in X_{\ast}(T) \times X^{\ast}(T) $, 
$ m = \langle \lambda, \gamma \rangle $ if $ \gamma(\lambda(t)) = t^m \; (t \in \Cbatsu) $.  

With these notations, for a 1-PSG $ \lambda : \Cbatsu \to T \subset G $, 
we have 
\begin{equation*}
V(\lambda) = \bigoplus_{\langle \lambda, \gamma \rangle > 0} V_{\gamma} .
\end{equation*}
Since every 1-PSG of $ G $ is conjugate to a certain $ \lambda \in X_{\ast}(T) $, we get 
\begin{equation*}
\nullcone_V = \bigcup_{\lambda \in X_{\ast}(T)} G \cdot V(\lambda) .
\end{equation*}
In this decomposition, 
there appear only finitely many different 
$ V(\lambda) \neq 0 $.  
Thus, a maximal $ V(\lambda) $ may contribute to an irreducible components of $ \nullcone_V $ 
(but not always).  
We call such $ U = V(\lambda) $ a maximal unstable subspace, and put  
$ \maxX_U := \{ \gamma \in X^{\ast}(T) \mid V_{\gamma} \subset U \} = \{ \gamma \mid \langle \lambda, \gamma \rangle > 0 \} 
$, 
a maximal unstable subset of weights.  
Let $ \maxX_1, \dots, \maxX_s $ 
be a complete set of representatives of 
maximal unstable subsets of weights up to the conjugation of 
the Weyl group $ W_G(T) $, 
and $ U_i = \bigoplus_{\gamma \in \maxX_i} V_{\gamma} \; (1 \leq i \leq s) $ 
the corresponding maximal unstable subspace.  

For a 1-PSG $ \lambda $, put 
\begin{equation*}
P(\lambda) := \{ g \in G \mid \text{ the limit }\; \lim_{t \to 0} \Ad (\lambda(t)) \, g \; \text{ exists} \}.
\end{equation*}
Then $ P(\lambda) $ is a parabolic subgroup which leaves $ V(\lambda) $ stable (Kempf\cite{Kempf.1978}). 
If $ U = V(\lambda) $ is a maximal unstable subspace, 
then the stabilizer $ \Stab_G(U) $ contains $ P(\lambda) $ and hence it is a parabolic subgroup.  

Define 
$ P_i := \Stab_G(U_i) $ for each $ 1 \leq i \leq s $.
Thus, we get a natural multiplication map 
$ G \times_{P_i} U_i \to C_i \subset \nullcone_V $, where 
$ C_i = G \cdot U_i $.  
Since $ G/P_i $ is projective, the image $ C_i $ is closed and irreducible.  
Thus we can choose $ C_1, \dots, C_r $ which give irreducible components of $ \nullcone_V $ 
after renumbering if necessary.  
In this way, we can determine the irreducible decomposition of $ \nullcone_V $:
\begin{equation}\label{eq:irred-decomp-NV-general}
\nullcone_V = \bigcup_{k = 1}^r C_k .  
\end{equation}
\indent
Let us apply this theory to our situation of the enhanced adjoint representation.

\subsection{}

\newcommand{\ee}{\varepsilon}

Now let us return back to our original notation, so that 
$ G = \GL_n(\C) $ which acts on 
$ W = \Mat_{n,p} \oplus \Mat_{q, n} \oplus \Mat_n $ 
as before.  
It is easy to see that the set of weights of $ W $ is given by 
\begin{equation*}
\Lambda = \Lambda(W) := 
\{ 0 \} \cup \Delta_n \cup \{ \pm \ee_i \mid 1 \leq i \leq n \}, 
\quad
\Delta_n = \{ \ee_i - \ee_j \mid 1 \leq i \neq j \leq n \} .
\end{equation*}
Here, 
$ \Delta_n $ denotes the set of roots of type $ A_{n - 1} $ and 
$ \ee_i $ denotes the standard basis in $ \lie{t}^{\ast} $, where 
$ \lie{t} $ is the Lie algebra of the diagonal torus $ T \subset G $.  
The multiplicity of $ \alpha \in \Delta_n $ is one, while 
the multiplicity of $ \alpha = 0 $ is $ n $; that of $ \ee_i $ is $ p $ and 
that of $ -\ee_i $ is $ q $. 
We describe a family of maximal unstable subsets of weights up to the Weyl group conjugation.  
Take a standard positive system $ \Delta_n^+ = \{ \ee_i - \ee_j \mid 1 \leq i < j \leq n \} $ of $ \Delta_n $.  

\begin{lemma}\label{lemma:max-unstable-subset-of-weights}
For $ 0 \leq k \leq n $, put 
$ X_k := \Delta_n^+ \cup \{ \ee_i \mid 1 \leq i \leq k \} \cup \{ - \ee_j \mid k < j \leq n \} $.  
Then $ X_0, X_1, \dots, X_n $ gives a complete system of representatives of 
maximal unstable subset of weights up to the conjugation of the Weyl group $ W_G(T) = S_n $.
\end{lemma}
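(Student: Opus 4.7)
The plan is to parametrize $X_{\ast}(T) \cong \Z^n$ via $\lambda \leftrightarrow (\lambda_1, \dots, \lambda_n)$, where $\lambda(t) = \diag(t^{\lambda_1}, \dots, t^{\lambda_n})$, so that $W_G(T) = S_n$ acts by coordinate permutation. Pairing with the weights in $\Lambda$ is immediate: $\langle \lambda, \ee_i - \ee_j \rangle = \lambda_i - \lambda_j$, $\langle \lambda, \ee_i \rangle = \lambda_i$, and $\langle \lambda, -\ee_j \rangle = -\lambda_j$. Hence
\begin{equation*}
\mathcal{X}(\lambda) := \{\gamma \in \Lambda : \langle \lambda, \gamma \rangle > 0\}
 = \{\ee_i - \ee_j : \lambda_i > \lambda_j\} \cup \{\ee_i : \lambda_i > 0\} \cup \{-\ee_j : \lambda_j < 0\}.
\end{equation*}
Applying a suitable element of $S_n$, I reduce to the case $\lambda_1 \geq \lambda_2 \geq \cdots \geq \lambda_n$.

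The key step is to show that maximality of $\mathcal{X}(\lambda)$ forces these inequalities to be strict and forbids any $\lambda_i = 0$. I do this by a perturbation argument: suppose $\lambda_i = \lambda_j$ with $i < j$, or some $\lambda_i = 0$. Replace $\lambda$ by $\lambda' := N \lambda + \delta$ for a large integer $N$ and a small integer perturbation $\delta$ chosen case by case: $\delta = e_i$ when the coincident value is positive, $\delta = -e_j$ when negative, $\delta = e_i - e_j$ when $\lambda_i = \lambda_j = 0$, and $\delta = \pm e_i$ (with sign determined by the neighbouring entries) for an isolated $\lambda_i = 0$. For $N \gg 0$ every strict inequality and sign condition of $\lambda$ persists in $\lambda'$, while at least one new weight enters the unstable set, so $\mathcal{X}(\lambda) \subsetneq \mathcal{X}(\lambda')$, contradicting maximality. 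A maximal $\lambda$ (after Weyl reduction) therefore satisfies $\lambda_1 > \cdots > \lambda_k > 0 > \lambda_{k+1} > \cdots > \lambda_n$ for a unique $k \in \{0, 1, \dots, n\}$, and reading off $\mathcal{X}(\lambda)$ yields $X_k$ exactly. Conversely $\lambda = (k, k-1, \dots, 1, -1, \dots, -(n-k))$ realizes $X_k$, so each $X_k$ genuinely occurs.

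It remains to check that the $X_k$ are pairwise non-conjugate under $S_n$ and are themselves maximal. The cardinality $|X_k \cap \{\ee_1, \dots, \ee_n\}| = k$ is manifestly Weyl-invariant, which distinguishes them under $S_n$-conjugation. For maximality, any inclusion $X_k \subseteq w \cdot X_{k'}$ would force $\Delta_n^+ \subseteq w \cdot \Delta_n^+$ and hence $w = 1$ by a cardinality count, whence $X_k \subseteq X_{k'}$; but $X_k$ contains $n-k$ weights of the form $-\ee_j$ while $X_{k'}$ contains only $n - k'$ such, so this fails whenever $k \neq k'$. The main technical obstacle I expect is the perturbation step itself: one must check case by case that the chosen $\delta$ really does produce an integer $1$-PSG whose unstable set strictly enlarges $\mathcal{X}(\lambda)$ without disturbing any prior inequality, including the delicate possibility of an isolated zero sitting between positive and negative entries.
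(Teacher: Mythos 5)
Your proof is correct and follows essentially the same route as the paper: reduce $\lambda$ by the Weyl group to weakly decreasing coordinates, argue that maximality forces strict inequalities and excludes zero entries, and then read off $X_k$ from the sign pattern. You are somewhat more careful than the paper in two places — the explicit perturbation handling the isolated-zero case, and the verification that the $X_k$ are genuinely maximal and pairwise non-conjugate — but these are refinements of the same argument, not a different approach.
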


\begin{proof}
Let $ X $ be a maximal unstable subset corresponding to a 1-PSG $ \lambda $.  
Taking conjugation of $ \lambda $ by $ S_n $, 
we can assume $ \lambda = ( \lambda_1, \dots, \lambda_n) , \; 
\lambda_1 > \lambda_2 > \cdots > \lambda_n $.  
Note that, if an equality appears among $ \lambda_i $'s, the corresponding unstable subset is not maximal.  
If $ \lambda_k > 0 \geq \lambda_{k + 1} $, 
$ X $ is given by $ X_k $.
\end{proof}

Let $ U_k \subset W $ be the maximal unstable subspace corresponding to $ X_k $ so that 
\begin{equation}\label{eq:definition-Uk}
\begin{aligned}
U_k = \bigoplus_{\alpha \in X_k} W_{\alpha} 
&= \{ (\xi, \eta, v) \in \Mat_{n,p} \oplus \Mat_{q, n} \oplus \Mat_n \mid 
\\
& \qquad \qquad
\xi_{i,j} = 0 \; ( i > k ), \eta_{i',\, j'} = 0 \; ( j' \leq k ), v \in \lie{n}^+
\} ,
\end{aligned}
\end{equation}
where $ \lie{n}^+ $ denotes a maximal nilpotent subalgebra consisting of upper triangular matrices 
with $ 0 $'s on the diagonal.  
It is the Lie algebra of the unipotent radical of a Borel subgroup 
$ B $ of upper triangular matrices in $ G = \GL_n $.  
Note that 
\begin{equation*}
\xi = \vectwo{\xi_1}{0} \quad 
(\xi_1 \in \Mat_{k, p}) , \qquad \text{ while } \qquad 
\eta = (0, \eta_2) \quad 
(\eta_2 \in \Mat_{q, n - k}) .
\end{equation*}

\begin{lemma}\label{lemma:irreducible-component-Ck}
Let $ U_k \; (0 \leq k \leq n) $ be a maximal unstable subspace as above.  
Then the stabilizer $ P_k = \Stab_G(U_k) $ of $ U_k $ is the Borel subgroup $ B $ for any $ k $ 
and $ \psi_k : G \times_B U_k \to C_k \subset \nilpotents(W) $ is a resolution of singularity.  
In particular, $ C_k $ is an irreducible closed subvariety in $ \nilpotents(W) $ of 
dimension $ (n^2 - n) + p k + q (n - k) $.  
\end{lemma}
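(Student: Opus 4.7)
The plan is to establish the three ingredients of a resolution of singularity---$P_k = B$, properness of $\psi_k$, and birationality of $\psi_k$---and then read off the dimension. First, $U_k$ is manifestly $B$-stable (upper triangular $g$ preserves $\lie{n}^+$, the support condition ``rows $i > k$ vanish'' on $\xi$, and the symmetric support condition ``columns $j \leq k$ vanish'' on $\eta$), so $P_k \supseteq B$. For the reverse inclusion, I would check that no negative simple root subgroup $U_{-\alpha_i}$ stabilizes $U_k$: the infinitesimal bracket $[X_{-\alpha_i}, E_{i,i+1}] = E_{i+1,i+1} - E_{i,i}$ is a nonzero Cartan element of weight $0$, and this weight is absent from $X_k$, so it escapes $U_k$.

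Since $U_k$ is a $B$-submodule of $W$, the space $G \times_B U_k$ is a $G$-equivariant vector bundle over the flag variety $G/B$, hence smooth and irreducible of dimension
\begin{equation*}
\dim G/B + \dim U_k = \tfrac{n(n-1)}{2} + \bigl(\tfrac{n(n-1)}{2} + pk + q(n-k)\bigr) = n^2 - n + pk + q(n-k).
\end{equation*}
The morphism $\psi_k$ factors through the closed embedding $G \times_B U_k \hookrightarrow G \times_B W \cong G/B \times W$ followed by the proper projection to $W$, so $\psi_k$ is proper and $C_k$ is irreducible and closed in $\nilpotents(W)$.

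The main step is birationality. I plan to take a generic $u_0 = (\xi_0, \eta_0, A_0) \in U_k$ with $A_0 \in \lie{n}^+$ regular nilpotent and describe the fiber $\psi_k^{-1}(u_0) \simeq \{ g \in G : g^{-1} u_0 \in U_k \}/B$. The adjoint condition $g^{-1} A_0 g \in \lie{n}^+$ is equivalent to $A_0 \in \Ad(g)\lie{n}^+$, and because the Springer fiber over a regular nilpotent is a single point, $\lie{n}^+$ is the unique Borel nilradical containing $A_0$; hence $\Ad(g)\lie{n}^+ = \lie{n}^+$, forcing $g \in N_G(B) = B$. For $g \in B$ the upper-triangularity of $g^{-1}$ automatically preserves the row/column support patterns of $\xi_0, \eta_0$, so the fiber collapses to $B/B$, a single point. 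Therefore $\psi_k$ is a proper birational morphism from the smooth irreducible variety $G \times_B U_k$ onto $C_k$, i.e.\ a resolution of singularity, and $\dim C_k = n^2 - n + pk + q(n-k)$. The chief obstacle is this last step; the Springer-fiber input on regular nilpotents does the crucial work, while the $\xi, \eta$ conditions reduce to a short triangular-matrix calculation.
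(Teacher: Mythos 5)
Your proposal is correct and follows essentially the same route as the paper's proof: $B$-stability plus a check that the stabilizer is no larger than $B$, properness via the vector-bundle-over-$G/B$ structure, and birationality from the fact that a regular (principal) nilpotent element lies in a unique Borel nilradical, so the fiber over such points is $B/B$. The only cosmetic difference is that the paper deduces $P_k\subseteq B$ directly from $N_G(\lie{n}^+)=B$ rather than by testing negative simple root subgroups.
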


\begin{proof}
Since $ P_k $ stabilizes $ \lie{n}^+ $, it is contained in $ B $.  
On the other hand, clearly $ B $ stabilizes $ U_k $, hence $ P_k = B $.  

Let us show generic fiber of the map $ \psi_k $ is a one point set.  
Since $ C_k \supset U_k $, 
we will examine the fiber of $ (\xi, \eta, v) \in U_k $, where 
$ v \in \lie{n}^+ $ is a principal nilpotent element.  
Take an element $ [g, (\xi', \eta', u)] \in \psi_k^{-1}((\xi, \eta, v)) $.  
Then $ (\xi, \eta, v) = \psi_k([g, (\xi', \eta', u)]) = (g \xi', \eta' g^{-1}, \Ad(g) u) $.  
In particular, we have 
$ v = \Ad(g) u \in \Ad(g) \lie{b} =: \lie{b}^g $.  
It is well known that a principal element belongs to a unique Borel subalgebra.  
Since $ v \in \lie{b} $, we conclude 
$ \lie{b} = \lie{b}^g $, hence $ g \in B $.  
%
Now we know 
$ [g, (\xi', \eta', u)] \sim [1_n, (\xi, \eta, v)] $, 
which means the element in the fiber is uniquely determined.

The set of elements 
$ \{ (\xi, \eta, v) \in C_k \mid \text{$ v $ is principal nilpotent} \} $ 
is open dense in $ C_k $, 
so the map $ \psi_k $ is generically one-to-one,
hence it is birational.  
Since $ G \times_N U_k $ is a vector bundle over a projective variety,
the map $ \psi_k $ is proper and it is a resolution.
\end{proof}

\begin{theorem}\label{theorem:irred-decomp-null-cone}
Let $ \nilpotents(W) $ be the null cone, and denote $ C_k \subset \nilpotents(W) \; (0 \leq k \leq n) $ as in 
Lemma~\ref{lemma:irreducible-component-Ck}.
\begin{thmenumerate}
\item\label{theorem:irred-decomp-null-cone:item:1}
$ \nilpotents(W) = \cup_{k = 0}^n C_k $ gives the irreducible decomposition.  
So the null cone has $ (n + 1) $ components, the number of which is independent of $ p \geq 1 $ and $ q \geq 1 $.
The dimension of $ \nilpotents(W) $ is $ n^2 - n + n \cdot \max \{ p, q \} $.  
\item\label{theorem:irred-decomp-null-cone:item:2:equi-dim}
The null cone $ \nilpotents(W) $ is equidimensional if and only if $ p = q $.  
In this case, the dimension of $ \nilpotents(W) $ is $ n^2 - n + p n $.
\item\label{theorem:irred-decomp-null-cone:item:3:equi-dim-fiber}
The dimension of $\nilpotents(W) $ is $ n^2 $ if and only if $ p = q = 1 $.  
If this is the case, 
any fiber $ \pi_W^{-1}((\tau; \Gamma)) $ of $ (\tau; \Gamma) \in \Im \pi_W $ is of dimension $ n^2 $.
\end{thmenumerate}
\end{theorem}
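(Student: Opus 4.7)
The plan is to deduce all three parts from Lemmas~\ref{lemma:max-unstable-subset-of-weights} and~\ref{lemma:irreducible-component-Ck} together with two semi-continuous $G$-invariants that separate the $C_k$. The Hilbert--Mumford criterion recalled in \S\ref{subsection:theory-Kraft-Wallach-Popov}, combined with Lemma~\ref{lemma:max-unstable-subset-of-weights}, already gives the set-theoretic cover $\nilpotents(W)=\bigcup_{k=0}^{n} G\cdot U_k=\bigcup_{k=0}^{n} C_k$, and Lemma~\ref{lemma:irreducible-component-Ck} shows each $C_k$ is irreducible closed of dimension $(n^2-n)+pk+q(n-k)$. Since $pk+q(n-k)=qn+(p-q)k$ is maximized at $k=n$ when $p\ge q$ and at $k=0$ when $p\le q$, the dimension of $\nilpotents(W)$ equals $n^2-n+n\cdot\max\{p,q\}$, yielding the dimension assertion in \eqref{theorem:irred-decomp-null-cone:item:1} and the criterion in \eqref{theorem:irred-decomp-null-cone:item:2:equi-dim} at once.

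The heart of the proof is the distinctness of the $C_k$. I would introduce the $G$-invariant integer-valued functions
\[
\delta_B(B,C,A):=\dim\sum_{j=0}^{n-1}A^j\,\Image B,\qquad
\delta_C(B,C,A):=\dim\bigcap_{j=0}^{n-1}\ker(CA^j),
\]
which are the rank and the corank of the stacked matrices $(B,AB,\ldots,A^{n-1}B)$ and $(C;CA;\cdots;CA^{n-1})$; hence $\{\delta_B\le r\}$ and $\{\delta_C\ge r\}$ are closed in $W$. Evaluating at the standard regular nilpotent $A=\sum_{i<n}E_{i,i+1}\in\lie{n}^+$, whose $A$-invariant subspaces are exactly the $\mathrm{span}(e_1,\ldots,e_j)$, one checks directly that for generic $(B,C,A)\in U_k$ the image $\Image B$ sits inside $\mathrm{span}(e_1,\ldots,e_k)$ and has a nonzero projection to $e_k$, while $\ker C$ contains $\mathrm{span}(e_1,\ldots,e_k)$ but not $e_{k+1}$; consequently $\delta_B=\delta_C=k$ generically on $U_k$, hence on $C_k$. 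Propagating by semi-continuity, $\delta_B\le k$ and $\delta_C\ge k$ hold on all of $C_k$. An inclusion $C_k\subset C_{k'}$ with $k\neq k'$ would then force $k\le k'$ via $\delta_B$ and $k\ge k'$ via $\delta_C$, a contradiction; thus the $C_k$ are pairwise non-comparable, hence constitute exactly $n+1$ distinct irreducible components, completing \eqref{theorem:irred-decomp-null-cone:item:1}.

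For \eqref{theorem:irred-decomp-null-cone:item:3:equi-dim-fiber}, imposing $\dim\nilpotents(W)=n^2$ in the formula from \eqref{theorem:irred-decomp-null-cone:item:1} forces $n\cdot\max\{p,q\}=n$, which given $p,q\ge 1$ requires $p=q=1$. To then show every fiber has dimension $n^2$, I would combine the lower bound $\dim\pi_W^{-1}(y)\ge\dim W-\dim\Im\pi_W=n^2$, valid on every nonempty fiber by Theorem~\ref{theorem:general-fiber-EAO}\eqref{theorem:general-fiber-EAO:item:dim-quotient-space}, with an upper bound coming from the overall scaling $\C^\times$-action on $W$. This action commutes with $G$ and descends to $\Im\pi_W$ with all strictly positive weights ($\tau_k$ has weight $k$, $\gamma^k_{i,j}$ has weight $k+2$), so $\lim_{t\to 0}t\cdot y=0$ for every $y\in\Im\pi_W$; scaling identifies $\pi_W^{-1}(y)$ with $\pi_W^{-1}(t\cdot y)$, and upper semi-continuity of fiber dimension then yields $\dim\pi_W^{-1}(y)\le\dim\pi_W^{-1}(0)=\dim\nilpotents(W)=n^2$. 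Both inequalities together give the uniform value $n^2$.

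The main obstacle is the generic calculation of $\delta_B$ and $\delta_C$ on $U_k$: one must carefully pin down the smallest $A$-invariant subspace containing $\Image B$ and the largest $A$-invariant subspace contained in $\ker C$, using the simple structure of a regular nilpotent $A\in\lie{n}^+$, and verify the required nondegeneracy holds on a genuine open dense subset for all $p,q\ge 1$ and all $0\le k\le n$ (including the boundary cases $k=0$ and $k=n$, where $B$ or $C$ vanishes). Everything else is routine bookkeeping with semi-continuity, the non-comparability criterion for irreducible components, and the standard $\C^\times$-contraction argument.
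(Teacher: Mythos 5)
Your proof is correct, and it diverges from the paper's in two substantive ways that are worth recording. For part \eqref{theorem:irred-decomp-null-cone:item:1} the paper simply invokes the general Kraft--Wallach/Popov machinery of \S\ref{subsection:theory-Kraft-Wallach-Popov} together with Lemma~\ref{lemma:irreducible-component-Ck}; but that machinery only guarantees that the irreducible components are found \emph{among} the $C_k$ (``a maximal $V(\lambda)$ may contribute \dots but not always''), and when $p\neq q$ the dimensions $(n^2-n)+pk+q(n-k)$ are pairwise distinct, so a containment $C_k\subset C_{k'}$ is not excluded by dimension count alone. Your semicontinuous $G$-invariants $\delta_B$ and $\delta_C$ supply exactly the missing non-comparability argument: $\delta_B\le k$ and $\delta_C\ge k$ hold identically on $U_k$ (hence on $C_k$ by closedness and $G$-invariance), while a single witness with $A$ the standard regular nilpotent, the $k$-th row of $\xi_1$ nonzero and the first column of $\eta_2$ nonzero shows both equal $k$ generically; this pins down all $n+1$ components in every case $p,q\ge 1$. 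For part \eqref{theorem:irred-decomp-null-cone:item:3:equi-dim-fiber} the paper cites \cite{Popov.Vinberg.1994} for the fact that the null cone is the fiber of maximal dimension, whereas you reprove that fact via the weighted $\C^\times$-contraction on $\Im\pi_W$; this is the standard proof of the cited statement and is fine, with the small caveat that upper semicontinuity of fiber dimension should be applied on the source (along the closure of the orbit $\C^\times\cdot x$, which meets $0$), not on the target. In short, your route is a self-contained refinement of the paper's: same skeleton (Hilbert--Mumford cover by the $G\cdot U_k$, the dimension formula from Lemma~\ref{lemma:irreducible-component-Ck}, maximization over $k$), plus explicit arguments where the paper defers to general theory.
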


\begin{proof}
From Lemma~\ref{lemma:irreducible-component-Ck}, the subvariety $ C_k $ is closed and irreducible.  
The general theory described in \S~\ref{subsection:theory-Kraft-Wallach-Popov} gives 
the irreducible decomposition of $ \nilpotents(W) $ 
(cf.~Equation \eqref{eq:irred-decomp-NV-general}).  
Since 
$ \dim C_k = (n^2 - n) + p k + q (n - k) $, we have 
\begin{equation*}
\dim \nilpotents(W) = \max_{0 \leq k \leq n} \Bigl\{ (n^2 - n) + p k + q (n - k) \Bigr\} 
= n^2 - n + n \cdot \max \{ p, q \}.
\end{equation*}
This proves 
\eqref{theorem:irred-decomp-null-cone:item:1}.  
The claim \eqref{theorem:irred-decomp-null-cone:item:2:equi-dim} follows immediately from 
\eqref{theorem:irred-decomp-null-cone:item:1}.  

Let us prove \eqref{theorem:irred-decomp-null-cone:item:3:equi-dim-fiber}.  
For any $ (\tau; \Gamma) \in \Im \pi_W $, 
the dimension of the fiber $ \pi_W^{-1}((\tau; \Gamma)) $ is greater than or equal to that of a general fiber, which is $ n^2 $ 
by Theorem~\ref{theorem:general-fiber-EAO}.  
On the other hand, the dimension of the null cone is the greatest among those of the fibers (see \cite{Popov.Vinberg.1994}).  
This completes the proof.
\end{proof}

\subsection{Orbits in the null cone}

Let us investigate orbits in an irreducible component 
$ C_k = G \cdot U_k \subset \nilpotents(W) $ (cf.~\eqref{eq:definition-Uk}).  
So pick $ w = ( \xi, \eta, v) \in U_k $, where $ v \in \lie{n}^+ $ is a principal nilpotent element.  
We denote the $ G $ orbit through $ w $ by $ \Orbit(w) $.  

We compute the stabilizer $ Z_G(w) $ of $ w $.  
Up to $ G $ conjugacy, we can assume 
\begin{equation*}
v = e := 
\begin{pmatrix}
 0 & 1 & \\
 & 0 & 1 & \\
 & & \ddots & \ddots & \\
 & &        & 0      & 1 \\
 & &        &        & 0   
\end{pmatrix} .
\end{equation*}
By direct calculation, we get 
\begin{equation}\label{eq:stabilizer-of-e-g}
Z_G(e) = \exp \left( \bigl\{ \sum_{i = 0}^{n - 1} \sigma_i e^i \mid \sigma_i \in \R \bigr\} \right) 
\ni \sum_{j = 1}^n x_j e^{j - 1} =: g.
\end{equation}
Assume that $ k \geq n - k $, and   
we denote $ \xi \in \Mat_{n, p} $ and $ \eta \in \Mat_{q, n} $ as 
\begin{equation}\label{eq:xi-eta}
\xi = 
\begin{pmatrix}
\xi_1
\\
0
\end{pmatrix}
\quad 
(\xi_1 \in \Mat_{k, p}), 
\qquad
\eta = ( \; 0 \; | \, \eta_1 \, ) 
\quad
(\eta_1 \in \Mat_{q, n - k}).
\end{equation}
Here we take 
\begin{equation}\label{eq:xi-1}
\xi_1 = ( \eb_k, \xi_1' ) \qquad (\xi_1' \in \Mat_{k, p - 1}) ,
\end{equation}
where $ \eb_k \in \C^k $ is the $ k $-th elementary vector whose $ k $-th coordinate is $ 1 $ and the other coordinates 
are zero.  
Then, the element $ g $ in \eqref{eq:stabilizer-of-e-g} stabilizes 
$ \xi $ and $ \eta $ if and only if $ x_1 = 1, \, x_2 = \cdots = x_{k} = 0 $.  
Thus we get 
$ Z_G(w) = \{ 1_n + \sum_{j = k + 1}^n x_j e^{j - 1} \} $.  
In particular, 
we know 
$ \codim \Orbit(w) = n - k $.  
For the orbit $ \Orbit(w) $, we can take $ \xi_1' $ in \eqref{eq:xi-1} and $ \eta_1 $ in \eqref{eq:xi-eta} freely, 
and they are uniquely determined by the orbit.  
So there is a fibration of orbits $ \Orbit(w) $ with the base space 
$ \Mat_{k, p -1} \times \Mat_{q, n - k} $ of dimension 
\begin{align*}
\dim \Orbit(w) + \dim \Mat_{k, p -1} {\times} \Mat_{q, n - k}
&= n^2 - (n - k) + k (p - 1) + q (n - k) 
\\
&= n^2 - n + k p + (n -k) q = \dim C_k .
\end{align*}
This means the family of orbits $ \{ \Orbit(w) \} $ makes up an open dense subset of the irreducible component $ C_k $.  
Since the orbits of the largest possible dimension constitute an open set, 
$ \dim \Orbit(w) = n^2- n + k $ is the largest among the orbits in $ C_k $.  
For the family parametrized by $ \Mat_{k, p -1} \times \Mat_{q, n - k} $, 
there is no reason to specialize the first column of $ \xi $.  
So, if the $ k $-th row of $ \xi $ does not vanish, we can follow the same arguments.  

Notice that this construction also applies to the case of $ k \leq n - k $, considering $ \eta $ instead of $ \xi $.  

Let us summarize what we have proven here.

\begin{theorem}\label{theorem:enhanced-nilpotent-orbit-generic}
Let 
$ C_k \subset \nilpotents(W) \; (0 \leq k \leq n) $ be 
an irreducible component of the null cone $ \nilpotents(W) $ (see Lemma~\ref{lemma:irreducible-component-Ck}).
The largest dimension of the nilpotent orbits in $ C_k $ 
is $ n^2 - \min \{ k, n - k \} $.  
Moreover, there exists an open dense subset of $ C_k $ 
which is fibered over an affine space of dimension $ k p + q (n - k) - \max \{ k , n - k \} $ 
with the fiber of isomorphic nilpotent orbits $ \Orbit $ of the largest dimension.

In particular, an irreducible component $ C_k $ contains a nilpotent orbit of dimension $ n^2 $ 
if and only if $ k = 0 $ or $ n $.
\end{theorem}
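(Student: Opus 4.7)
My plan is to fix an irreducible component $C_k = G \cdot U_k$ and work inside a convenient open dense subset of $U_k$ on which a uniform normal form exists, then read the generic orbit dimension and the orbit family from an explicit stabilizer computation. Concretely, pick $w = (\xi, \eta, v) \in U_k$ with $v \in \lie n^+$ principal nilpotent; the principal locus is open dense in $\lie n^+$, so restricting to it loses no information about generic orbits in $C_k$. After $B$-conjugation (legitimate since $U_k$ is $B$-stable by Lemma~\ref{lemma:irreducible-component-Ck}), I may take $v = e$, the standard principal Jordan block.

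The central calculation is the stabilizer $Z_G(w)$. The centralizer $Z_G(e)$ is the familiar abelian group of invertible polynomials $g = \sum_{j=1}^n x_j e^{j-1}$ in $e$, of dimension $n$. Assume $k \ge n - k$; the mirror case is handled identically with the roles of $\xi$ and $\eta$ exchanged. Writing $\xi = \vectwo{\xi_1}{0}$ with $\xi_1 \in \Mat_{k,p}$, a generic $\xi_1$ has nonzero $k$-th row, and the $Z_G(e)$-action (a triangular change of the parameters $x_1, \ldots, x_k$) normalizes the first column of $\xi_1$ to $\eb_k$. A direct computation then shows that $g\xi = \xi$ forces $x_1 = 1$ and $x_2 = \cdots = x_k = 0$; moreover, because $n - k \le k$, the condition $\eta g^{-1} = \eta$ is then automatic. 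Thus
\begin{equation*}
Z_G(w) = \Bigl\{ 1_n + \sum_{j = k+1}^n x_j e^{j-1} \Bigr\},
\end{equation*}
of dimension $n - k$, so $\dim \Orbit(w) = n^2 - (n - k) = n^2 - \min\{k, n - k\}$.

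Once $w$ is in this normal form, the remaining data $\xi_1' \in \Mat_{k, p-1}$ (the last $p - 1$ columns of $\xi_1$) and $\eta_1 \in \Mat_{q, n-k}$ are orbit invariants and can be chosen freely, parametrizing a family of pairwise isomorphic $G$-orbits over an affine base of dimension $k(p - 1) + q(n - k) = kp + q(n - k) - \max\{k, n - k\}$. The bookkeeping $n^2 - (n - k) + k(p - 1) + q(n - k) = n^2 - n + kp + q(n - k) = \dim C_k$ shows this family exhausts an open dense subset of $C_k$, so the orbit dimension found is indeed maximal there. The final assertion follows since $n^2 - \min\{k, n-k\} = n^2$ exactly when $\min\{k, n-k\} = 0$, that is $k = 0$ or $k = n$.

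The step I expect to demand most care is the normalization: checking that on an open dense subset of $U_k$ the requisite row of $\xi$ (or, in the symmetric case, column of $\eta$) is nonzero, and that the $Z_G(e)$-action is indeed transitive on the leading-column data so that the normal form $\xi_1 = (\eb_k, \xi_1')$ is genuinely available. This is a routine density and triangular linear algebra exercise, but it is what guarantees that the stabilizer formula above describes a generic orbit in $C_k$ rather than a specialized one, and hence that the fibration covers the expected open dense subset.
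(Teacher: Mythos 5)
Your proposal is correct and follows essentially the same route as the paper: normalize $v$ to the standard principal Jordan block $e$, compute $Z_G(e)$ as polynomials in $e$, impose the normalization $\xi_1=(\eb_k,\xi_1')$ to cut the stabilizer down to dimension $n-k$ (with the case $k\le n-k$ handled symmetrically via $\eta$), and match $\dim\Orbit(w)+\dim\bigl(\Mat_{k,p-1}\times\Mat_{q,n-k}\bigr)$ against $\dim C_k$ to conclude density. The points you flag as needing care (genericity of the nonvanishing row and transitivity of the $Z_G(e)$-action on the leading column) are exactly the ones the paper also treats, and your verification that $\eta g^{-1}=\eta$ is automatic when $n-k\le k$ is consistent with the paper's stabilizer computation.
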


\begin{remark}
Let us consider $ w =(\xi, \eta, v) \in U_k $ as above.  
Even if $ v $ is not principal, a $ G $-orbit $ \Orbit(w) $ through $ w $ can attain 
the largest possible dimension in the irreducible component $ C_k $.  
It seems rather subtle to describe when an orbit $ \Orbit(w) $ has the largest dimension.
\end{remark}

\renewcommand{\MR}[1]{}


\def\cftil#1{\ifmmode\setbox7\hbox{$\accent"5E#1$}\else
  \setbox7\hbox{\accent"5E#1}\penalty 10000\relax\fi\raise 1\ht7
  \hbox{\lower1.15ex\hbox to 1\wd7{\hss\accent"7E\hss}}\penalty 10000
  \hskip-1\wd7\penalty 10000\box7} \def\cprime{$'$} \def\cprime{$'$}
  \def\Dbar{\leavevmode\lower.6ex\hbox to 0pt{\hskip-.23ex \accent"16\hss}D}
\providecommand{\bysame}{\leavevmode\hbox to3em{\hrulefill}\thinspace}
\providecommand{\MR}{\relax\ifhmode\unskip\space\fi MR }
\providecommand{\MRhref}[2]{%
  \href{http://www.ams.org/mathscinet-getitem?mr=#1}{#2}
}
\providecommand{\href}[2]{#2}

\end{document}